\newtheorem*{thm*}{Theorem}
\newtheorem{thm}{Theorem}
\newtheorem{exam}{Example}
\newtheorem{lemma}{Lemma}
\newtheorem{remark}{Remark}
\newtheorem{cor}{Corollary}
\newtheorem{prop}{Proposition}
\begin{document}

\title{Some elliptic problems involving the gradient  on general bounded and exterior domains}




\author{A. Aghajani\thanks{School of Mathematics, Iran University of Science and Technology, Narmak, Tehran, Iran. Email: aghajani@iust.ac.ir.} \and C. Cowan\thanks{Department of Mathematics, University of Manitoba, Winnipeg, Manitoba, Canada R3T 2N2. Email: craig.cowan@umanitoba.ca. Research supported in part by NSERC.} 
}



\maketitle


\def\d{ \partial_{x_j} }
\def\Na{{\mathbb{N}}}

\def\Z{{\mathbb{Z}}}

\def\IR{{\mathbb{R}}}

\newcommand{\E}[0]{ \varepsilon}

\newcommand{\la}[0]{ \lambda}

\newcommand{\pa}{\partial}

\newcommand{\s}[0]{ \mathcal{S}}

\newcommand{\AO}[1]{\| #1 \| }

\newcommand{\BO}[2]{ \left( #1 , #2 \right) }

\newcommand{\CO}[2]{ \left\langle #1 , #2 \right\rangle}

\newcommand{\R}[0]{ \IR\cup \{\infty \} }

\newcommand{\co}[1]{ #1^{\prime}}

\newcommand{\p}[0]{ p^{\prime}}

\newcommand{\m}[1]{   \mathcal{ #1 }}

\newcommand{ \W}[0]{ \mathcal{W}}

\newcommand{ \A}[1]{ \left\| #1 \right\|_H }

\newcommand{\B}[2]{ \left( #1 , #2 \right)_H }

\newcommand{\C}[2]{ \left\langle #1 , #2 \right\rangle_{  H^* , H } }

 \newcommand{\HON}[1]{ \| #1 \|_{ H^1} }

\newcommand{ \Om }{ \Omega}

\newcommand{ \pOm}{\partial \Omega}

\newcommand{\D}{  C_c^{0,1}(\IR^N)   }

\newcommand{\DP}{ \mathcal{D}^{\prime} \left( \Omega \right)  }

\newcommand{\DPP}[2]{   \left\langle #1 , #2 \right\rangle_{  \mathcal{D}^{\prime}, \mathcal{D} }}

\newcommand{\PHH}[2]{    \left\langle #1 , #2 \right\rangle_{    \left(H^1 \right)^*  ,  H^1   }    }

\newcommand{\PHO}[2]{  \left\langle #1 , #2 \right\rangle_{  H^{-1}  , H_0^1  }}

 \newcommand{\HO}{ H^1 \left( \Omega \right)}

\newcommand{\HOO}{ H_0^1 \left( \Omega \right) }

\newcommand{\CC}{C_c^\infty\left(\Omega \right) }

\newcommand{\N}[1]{ \left\| #1\right\|_{ H_0^1  }  }

\newcommand{\IN}[2]{ \left(#1,#2\right)_{  H_0^1} }

\newcommand{\INI}[2]{ \left( #1 ,#2 \right)_ { H^1}}

\newcommand{\HH}{   H^1 \left( \Omega \right)^* }

\newcommand{\HL}{ H^{-1} \left( \Omega \right) }

\newcommand{\HS}[1]{ \| #1 \|_{H^*}}

\newcommand{\HSI}[2]{ \left( #1 , #2 \right)_{ H^*}}

\newcommand{\WO}{ W_0^{1,p}}
\newcommand{\w}[1]{ \| #1 \|_{W_0^{1,p}}}

\newcommand{\ww}{(W_0^{1,p})^*}

\newcommand{\Ov}{ \overline{\Omega}}
\newcommand{\underu}{\underline{u}}
\newcommand{\underv}{\underline{v}}
\newcommand{\overu}{\overline{u}}


\begin{abstract}  In this article we consider the existence of positive singular solutions on bounded domains and also classical solutions on exterior domains.   First we consider positive singular solutions of the following problems:

\begin{equation}  \label{eq_abst_1}-\Delta u    = (1+g(x)) | \nabla u|^p  \qquad \mbox{ in }  B_1, \qquad  u = 0 \mbox{ on } \;\;  \partial B_1, \qquad \mbox{ and}
\end{equation} 

\begin{equation} \label{eq_abst_2}
-\Delta u    =  | \nabla u|^p  \qquad \mbox{ in }  \Omega, \qquad  u = 0 \mbox{ on } \;\;  \pOm.
\end{equation}

In the first problem $B_1$ is the unit ball in $ \IR^N$ and in the second $\Omega$ is a bounded smooth domain in $ \IR^N$.  In both cases we assume $ N \ge 3$, $ \frac{N}{N-1}<p<2$ and in the first problem we assume $ g \ge 0$ is a H\"older continuous function with $g(0)=0$.     We obtain positive singular solutions in both cases. \\
We also consider (\ref{eq_abst_2}) in the case of $\Omega$ an exterior domain $ \IR^N$ where $N \ge 3$ and $ p >\frac{N}{N-1}$.  We prove the existence of a bounded positive classical solution of (\ref{eq_abst_2}) with the additional property that $ \nabla u(x) \cdot x>0$ for large $|x|$. 
\end{abstract}


\section{Introduction}

In this work we are interested in obtaining positive singular solutions of  
\begin{eqnarray} \label{eq_intro}
 \left\{ \begin{array}{lcl}
\hfill   -\Delta u    &=& (1+g(x))| \nabla u|^p  \qquad \mbox{ in }  B_1 \backslash \{0\},  \\
\hfill u &=& 0 \hfill \mbox{ on }  \partial B_1,
\end{array}\right.
  \end{eqnarray}
where $ p>1$ and  $B_1$ is the unit ball centered at the origin in $ \IR^N$.   Here $ g \ge 0$ is a H\"older continuous function with $ g(0)=0$.     We also consider the existence of positive singular solutions of 
\begin{eqnarray} \label{eq_intro_2}
 \left\{ \begin{array}{lcl}
\hfill   -\Delta u    &=& | \nabla u|^p  \qquad \mbox{ in }  \Omega,  \\
\hfill u &=& 0 \hfill \mbox{ on }  \pOm,
\end{array}\right.
  \end{eqnarray}
where $\Omega$ a bounded smooth domain in $\IR^N$.  Suppose $u$ is a classical solution of (\ref{eq_intro_2}),  then we can rewrite the equation as $ -\Delta u -b(x) \cdot \nabla u=0$ in $\Omega$ with $u=0$ on $ \pOm$ where $ b(x):= | \nabla u|^{p-2} \nabla u \in L^\infty$ and then apply the maximum principle to see $u=0$.   So the only hope of finding a nonzero solution of either problem is to find a singular solution.  
We also consider (\ref{eq_intro_2}) in the case of exterior domains.

  We now state our main theorems. 
   \begin{thm} \label{main_non_lin} (Bounded domain problems) \begin{enumerate} 
   \item Suppose $N \ge 3$, and $ \frac{N}{N-1}<p<2$ and $ g \ge 0$ is a H\"older continuous function with $g(0)=0$.  Then there exists an infinite number of positive singular solutions $u^t$ (indexed by $t$ for large $t$) of (\ref{eq_intro}) which blows up at the origin.  Moreover $u^t \rightarrow 0$ uniformly away from the origin. 
   \item   Let $ x_0 \in \Omega$ where $\Omega$ is a bounded domain with smooth boundary in $\IR^N$. Suppose  $p$ and $N$ satisfy the same restrictions as part 1 of the theorem.    Then there exists an infinite number of positive singular solution $u^t$ (indexed by $t$ for large $t$) of (\ref{eq_intro_2}) which blows up at $ x_0$ and is a classical solution away from $x_0$.   Moreover $u^t \rightarrow 0$ uniformly away from $x_0$. 
   
   \end{enumerate} 
    \end{thm}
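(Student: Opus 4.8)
The plan is to build explicit singular supersolutions and subsolutions near the singular point and then use a sub/supersolution argument to produce the solution, keeping track of a parameter $t$ that controls the ``size'' of the singularity. For the ball problem (\ref{eq_intro}), the natural ansatz is a radial profile of the form $u(x) = \phi(|x|)$ where $\phi$ behaves like $C |x|^{-\beta}$ as $|x| \to 0$ with the exponent $\beta$ chosen so that $|x|^{-\beta}$ is the correct homogeneity for $-\Delta u = |\nabla u|^p$; matching homogeneities gives $-\beta - 2 = p(-\beta-1)$, i.e. $\beta = \frac{2-p}{p-1}$, which is positive precisely because $1 < p < 2$, and one checks the constant and the condition $\frac{N}{N-1} < p$ is exactly what makes $-\Delta(|x|^{-\beta}) > 0$ (i.e. $\beta < N-2$) so that the pure power is an honest supersolution of the $g \equiv 0$ equation. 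First I would make this precise: set $w_t(x) = t \, w(x/\text{something})$ or, more simply, look for a one-parameter family $u^t$ obtained by solving the equation with a large boundary-type or inner data indexed by $t$, exploiting that the equation $-\Delta u = (1+g)|\nabla u|^p$ has a natural scaling.

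The key steps, in order, would be: (1) Construct a subsolution $\underline{u}^t$ that is singular at the origin — take a truncation/modification of the explicit power $\frac{t}{|x|^\beta}$, say $\underline{u}^t = t(|x|^{-\beta} - 1)$ or a smoothed version, and verify using $\frac{N}{N-1} < p < 2$ and $g \ge 0$ that $-\Delta \underline{u}^t \le (1+g)|\nabla \underline{u}^t|^p$ in $B_1 \setminus \{0\}$, with $\underline{u}^t \le 0$ on $\partial B_1$; here the hypothesis $g(0) = 0$ together with Hölder continuity is what lets the lower-order error term from $g$ be absorbed near the origin (since $g(x) = O(|x|^\alpha)$ beats the gradient term's homogeneity). (2) Construct a supersolution $\overline{u}^t$ that is also singular at the origin but lies above $\underline{u}^t$: a good candidate is $\overline{u}^t = \underline{u}^t + C_t$ for a suitable constant, or a slightly larger power profile; one needs $-\Delta \overline{u}^t \ge (1+g)|\nabla \overline{u}^t|^p$, and the monotonicity/convexity structure should make adding a constant harmless to the Laplacian while only helping the inequality if handled correctly — more likely I would take $\overline{u}^t$ to be the solution of the linear problem $-\Delta v = (1+g)|\nabla \underline{u}^t|^p$ with zero boundary data, check $v \ge \underline{u}^t$, and iterate. (3) Run the monotone iteration between $\underline{u}^t$ and $\overline{u}^t$ on $B_1 \setminus \overline{B_\varepsilon}$, pass $\varepsilon \to 0$ using interior elliptic estimates away from $0$ and the uniform barriers at $0$, to obtain a classical solution $u^t$ of (\ref{eq_intro}) in $B_1 \setminus \{0\}$ which is trapped between the two barriers and hence genuinely singular at the origin. (4) Show $u^t \to 0$ uniformly on compact subsets of $\overline{B_1} \setminus \{0\}$: this should follow from an a priori bound away from the origin — e.g. a gradient estimate of Bernstein type for $-\Delta u = (1+g)|\nabla u|^p$ on annuli, independent of $t$, combined with the fact that the ``mass'' of the singularity can be renormalized — or more directly from an explicit decaying supersolution on $B_1 \setminus \overline{B_\varepsilon}$ that forces $u^t$ small there once we quotient out by the behavior at $0$; the indexing by $t$ should be arranged so that different $t$ give genuinely different (ordered) solutions, giving the claimed infinitude.

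For part 2, the strategy is essentially the same but localized: near $x_0$ the domain looks like a ball, so I would transplant the barriers from part 1 into a small ball $B_r(x_0) \subset \Omega$, and then extend to all of $\Omega$ by solving an auxiliary problem on $\Omega \setminus B_{r/2}(x_0)$ (where the equation is uniformly elliptic and $u$ is bounded, so standard theory applies) and matching. The absence of the weight $g$ in (\ref{eq_intro_2}) actually simplifies the near-$x_0$ analysis. One technical point is that the global supersolution must respect the boundary condition $u = 0$ on $\partial\Omega$; I would handle this by adding a solution of a linear Dirichlet problem with the appropriate (bounded) right-hand side, using the maximum principle to keep the ordering.

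The hard part will be step (2)/(4): producing a singular supersolution that (a) dominates the singular subsolution, (b) still satisfies the differential inequality all the way down to the origin despite the $(1+g)|\nabla u|^p$ nonlinearity being superlinear in $\nabla u$, and (c) is compatible with the zero boundary condition — these three requirements pull in different directions, and the condition $p < 2$ (keeping $\beta > 0$ and the nonlinearity subcritical in a suitable sense) together with $p > \frac{N}{N-1}$ (making the power profile superharmonic) is exactly the window in which all three can be met simultaneously. Getting the uniform-decay conclusion $u^t \to 0$ away from the singularity, independent of $t$, will require a $t$-independent a priori estimate, and establishing that is the second main obstacle; a Bernstein-type gradient bound adapted to the $|\nabla u|^p$ right-hand side, of the form $|\nabla u| \le C(\Omega', \|u\|_{L^\infty(\Omega'')})$ on nested subdomains away from the singular point, should do it.
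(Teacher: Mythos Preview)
Your proposed approach is fundamentally different from the paper's, and it has a genuine gap at its core.

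The paper does \emph{not} use sub/supersolutions at all. It takes the explicit one-parameter family $u_t$ of singular radial solutions of the $g\equiv 0$ problem on $B_1$ (your power profile $|x|^{-\beta}$ is exactly the $t=0$ member of this family, up to an additive constant), linearizes the equation around $u_t$ to get an operator $L_t(\phi)=-\Delta\phi - p|\nabla u_t|^{p-2}\nabla u_t\cdot\nabla\phi$, proves a uniform-in-$t$ mapping estimate $\|\phi\|_X\le C\|L_t\phi\|_Y$ in weighted H\"older-type norms, and then runs a Banach contraction to solve for the correction $\phi$ with $u=u_t+\phi$. The reason $g(0)=0$ suffices with no smallness is that $u_t\to 0$ away from the origin as $t\to\infty$, so the forcing term $g|\nabla u_t+\nabla\phi|^p$ is small near $0$ because $g$ is small there, and small away from $0$ because $\nabla u_t$ is small there. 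The general-domain result is obtained by the same mechanism after a cutoff and a gluing argument for the linear theory.

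Your sub/supersolution scheme runs into two concrete obstacles that your outline does not address. First, monotone iteration does not apply to $-\Delta u=(1+g)|\nabla u|^p$: the right-hand side depends on $\nabla u$, not $u$, so the map $u_n\mapsto u_{n+1}$ defined by $-\Delta u_{n+1}=(1+g)|\nabla u_n|^p$ has no reason to preserve ordering, and the standard Perron/Amann machinery requires either a monotone dependence on $u$ or a Nagumo condition plus a different existence mechanism (degree theory), neither of which you invoke. Second, and more structurally, the natural candidates are ordered the wrong way: if $u_*$ is any exact solution of $-\Delta u_*=|\nabla u_*|^p$, then $u_*$ is a \emph{sub}solution of the $g\ge 0$ problem, while $\lambda u_*$ is a \emph{super}solution only for $\lambda\le (1+\sup g)^{-1/(p-1)}<1$, giving $\overline u<\underline u$. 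Your suggestion ``$\overline u^t=\underline u^t+C_t$'' changes neither $\Delta$ nor $\nabla$, so it does nothing; and ``solve $-\Delta v=(1+g)|\nabla\underline u^t|^p$ and iterate'' is just the first step of the non-monotone iteration above, with no mechanism to produce a genuine ordered supersolution that is singular at the origin. Without a singular supersolution that dominates a singular subsolution, the method cannot trap a singular solution.

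A secondary issue: even granting existence, your plan for the conclusion $u^t\to 0$ away from the origin relies on a $t$-independent Bernstein bound, but such a bound would give compactness, not decay to zero; in the paper the decay is inherited directly from $u_t\to 0$ and $\|\phi\|_X\le R$ small, which is specific to the perturbative construction.
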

    
    \begin{thm} \label{exterior_thm}  (Exterior domain problem) Suppose $N \ge 3$, $ \Omega$ is an exterior domain in $ \IR^N$ with smooth boundary and $ p> \frac{N}{N-1}$.   Then there is an infinite number of positive classical solutions of (\ref{eq_intro_2}) (say $u^t$ for large $t$) which satisfy 
$ \nabla u^t(x) \cdot x >0$.   In fact for large $t$ we have 
\[ \lim_{|x| \rightarrow \infty} \left( |x|^{N-2} ( x \cdot \nabla u^t(x)) - \frac{1}{t^\frac{1}{p-1}} \right)=0.\] 
    
    \end{thm}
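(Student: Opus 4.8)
\medskip
\noindent\emph{Proof proposal for Theorem~\ref{exterior_thm}.}
The idea is to solve the problem explicitly when $\Om$ is the exterior of a ball, then to transport the resulting radial profile to a general exterior domain by a sub/supersolution argument, and to extract the sharp behaviour at infinity at the very end. For the model case $\Om=\IR^N\setminus\overline{B_R}$ we seek a radial solution $u=u(r)$, $r=|x|$, with $u'>0$; then $-\Delta u=|\nabla u|^p$ reads $-(r^{N-1}u')'=r^{N-1}(u')^p$, and with $\phi(r):=r^{N-1}u'(r)$ this is the separable Bernoulli equation $\phi'(r)=-\,r^{-(N-1)(p-1)}\phi(r)^p$, which integrates to
\[
\phi(r)=\Bigl(t-\beta\,r^{\,1-(N-1)(p-1)}\Bigr)^{-1/(p-1)},\qquad \beta:=\frac{p-1}{(N-1)(p-1)-1}.
\]
The hypothesis $p>\frac{N}{N-1}$ enters twice here: it makes $\beta>0$, and it makes the exponent $1-(N-1)(p-1)$ negative, so $r^{\,1-(N-1)(p-1)}\to0$ as $r\to\infty$. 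Hence, as soon as $t>\beta R^{\,1-(N-1)(p-1)}$ (true for $t$ large), $\phi$ is smooth, positive and decreasing with $\phi(r)\to t^{-1/(p-1)}$, and $u^t(r):=\int_R^r\phi(s)s^{1-N}\,ds$ is a bounded, positive, strictly increasing classical radial solution vanishing on $|x|=R$, with $|x|^{N-2}(x\cdot\nabla u^t)=\phi(|x|)\to t^{-1/(p-1)}$ and $x\cdot\nabla u^t>0$. This already settles the theorem when $\Om$ is the exterior of a ball.

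\medskip
For a general exterior domain write $\Om=\IR^N\setminus K$ with $K$ compact; after translating we may assume $0\in\operatorname{int}K$, and we fix $R$ with $K\subset B_R$. Let $h$ be the harmonic capacity potential of $K$ (harmonic in $\Om$, $h=1$ on $\pOm$, $0<h<1$, and $h(x)=\kappa|x|^{2-N}+O(|x|^{1-N})$ with $\kappa>0$). For small $M>0$ one checks that $M(1-h)$ is a positive subsolution of $-\Delta u=|\nabla u|^p$ and the constant $M$ a supersolution, both with the correct sign on $\pOm$ and ordered; since $q\mapsto|q|^p$ is $C^1$ for $p>1$ and the barriers are bounded, a sub/supersolution argument (exhausting $\Om$ by bounded pieces) produces a classical solution $u_M$ with $M(1-h)\le u_M\le M$ in $\Om$, $u_M=0$ on $\pOm$ and $u_M(x)\to M$ as $|x|\to\infty$. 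It is positive by the strong minimum principle, since $-\Delta u_M=|\nabla u_M|^p\ge0$ makes $u_M$ superharmonic with boundary value $0$ and positive limit at infinity; and it is the unique solution with limit $M$, by a comparison principle for the equation (the difference of two solutions solves a linear equation $-\Delta v-b\cdot\nabla v=0$ with $b\in L^\infty$). In particular $M\mapsto u_M$ is increasing and continuous.

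\medskip
It remains to compute $\lim_{|x|\to\infty}|x|^{N-2}(x\cdot\nabla u_M)$ and to realize all small positive values of this limit. The crucial ingredient is the sharp decay $|\nabla u_M(x)|\le C(M)|x|^{1-N}$: on $B_{|x|/2}(x)$ one has $\operatorname{osc} u_M=\operatorname{osc}(M-u_M)\le M\sup_{B_{|x|/2}(x)}h=O(M|x|^{2-N})$, and after rescaling to the unit ball the nonlinear coefficient becomes $2^{p-2}|x|^{2-p}$, whose product with $(\operatorname{osc})^{p-1}$ is of order $|x|^{\,N-(N-1)p}\to0$ —again by $p>\frac{N}{N-1}$— so a standard interior gradient estimate applies. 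Consequently $|\nabla u_M|^p=O(|x|^{-(N-1)p})$ is integrable on $\Om$ precisely when $p>\frac{N}{N-1}$, and writing $w_M:=M-u_M$ (so $\Delta w_M=|\nabla u_M|^p\ge0$, $w_M=M$ on $\pOm$, $w_M\to0$), the Riesz decomposition $w_M=Mh-G_\Om(|\nabla u_M|^p)$ gives a genuine expansion
\[
w_M(x)=A_M\,|x|^{2-N}+o(|x|^{2-N}),\qquad A_M=\kappa M-c_M,\quad 0\le c_M\le C M^p,
\]
which elliptic estimates let one differentiate, so $|x|^{N-2}(x\cdot\nabla u_M(x))\to(N-2)A_M$. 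The map $M\mapsto(N-2)A_M$ is continuous, tends to $0$ as $M\to0^+$, and satisfies $(N-2)A_M\ge M\bigl((N-2)\kappa-C'M^{p-1}\bigr)$, so it takes every value in some interval $(0,\varepsilon_0)$. Hence for each large $t$ there is $M=M(t)$ with $(N-2)A_{M(t)}=t^{-1/(p-1)}$, and $u^t:=u_{M(t)}$ meets all the requirements, these solutions being pairwise distinct (their prescribed limits $t^{-1/(p-1)}$ differ). Finally $x\cdot\nabla u^t>0$ for large $|x|$ is immediate from the positive limit just obtained; it holds for all $x\in\Om$ when $p\le2$ and $K$ is star-shaped about the origin, by the minimum principle applied to $w=x\cdot\nabla u^t$, which solves $\Delta w+p|\nabla u^t|^{p-2}\nabla u^t\cdot\nabla w=(p-2)|\nabla u^t|^p\le0$.

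\medskip
The main obstacle is this last step: showing that $u_M$ has a \emph{true, differentiable} asymptotic expansion at infinity — which is exactly where $p>\frac{N}{N-1}$ is indispensable, both for the sharp gradient decay and for the integrability of $|\nabla u_M|^p$ that controls the Green potential — and then tracking the leading coefficient $A_M$ finely enough to hit the prescribed value $t^{-1/(p-1)}$. (A workable alternative to the sub/supersolution construction is to build $u^t$ directly as a small perturbation of a fixed smooth extension of the radial profile, solving for the correction by a contraction in weighted Hölder spaces on $\Om$: the drift in the linearized equation has size $O(|x|^{-(N-1)(p-1)})=O(|x|^{-1-\delta})$, $\delta=(N-1)(p-1)-1>0$, hence is a subcritical perturbation of $-\Delta$; but one must still tune the asymptotic constant, e.g. by a one-parameter adjustment of the extension near $K$.)
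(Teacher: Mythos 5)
Your proposal takes a genuinely different route from the paper. The paper fixes the explicit radial profile $u_t$ on $B_1^c$, glues it into $\Omega$ with a cutoff, and runs a contraction in the weighted H\"older space with $\sigma=N-2+\varepsilon$ around the linearized operator $L^t$ (shown, for large $t$, to be a small perturbation of $\Delta$ on $\widehat X$). You instead try to build a family $u_M$ directly by ordered barriers $M(1-h)\le u_M\le M$ using the capacity potential $h$, then extract the leading coefficient at infinity from a Riesz decomposition $M-u_M=Mh-G_\Omega(|\nabla u_M|^p)$ and match it to $t^{-1/(p-1)}$ by continuity in $M$. The two strategies are conceptually independent, and several of your ingredients (the Bernoulli reduction for the radial profile, the observation that $p>\frac{N}{N-1}$ makes $|\nabla u_M|^p$ integrable and forces $N-(N-1)p<0$ in the rescaled gradient estimate, the Pohozaev-type identity $\Delta w+p|\nabla u|^{p-2}\nabla u\cdot\nabla w=(p-2)|\nabla u|^p$ for $w=x\cdot\nabla u$) are correct and are not present in the paper's argument.

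There is, however, a genuine gap in the existence step. The sub/supersolution method you invoke is not an off-the-shelf tool for nonlinearities $f(\nabla u)$: the standard ordered-iteration argument is driven by monotonicity of $f$ in $u$, which is absent here, and the existence theorems that do apply to gradient-dependent right-hand sides (Boccardo--Murat--Puel, Amann--Crandall) require natural (at most quadratic) growth $|f(q)|\le C(1+|q|^2)$, i.e.\ $p\le 2$. Since the theorem is stated for the full range $p>\frac{N}{N-1}$ (which for $N\ge 3$ is $\ge\frac32$ and hence includes all $p>2$), the sentence ``a sub/supersolution argument $\dots$ produces a classical solution $u_M$ with $M(1-h)\le u_M\le M$'' is unjustified on a large part of the range. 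One might hope to salvage $p>2$ by smallness of $M$ together with the gradient decay $|\nabla u_M|=O(|x|^{1-N})$, but as written that estimate is derived \emph{after} the solution has been produced, so the argument is circular. (Your uniqueness claim has the same issue: it needs $|\nabla u_M|\in L^\infty$ to linearize, which is established later.) A related but more minor point: even for $p\le 2$ the scaling gradient estimate you sketch requires an a priori interior $C^1$ bound for $-\Delta v=b\,|\nabla v|^p$ with small $b$; this is true but should be stated as the precise Bernstein/Ladyzhenskaya-type estimate you are using.

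One last remark on your parenthetical alternative: you suggest that a weighted-H\"older contraction around a cutoff extension of the radial profile would ``still require tuning the asymptotic constant.'' In the paper's setup no tuning is needed: the correction $\phi$ is found in the space with weight $\sigma=N-2+\varepsilon$, so $|x|^{N-1}|\nabla\phi|=O(|x|^{-\varepsilon})\to 0$, strictly faster than the decay of $\nabla u_t$; hence $|x|^{N-2}\,x\cdot\nabla u^t\to t^{-1/(p-1)}$ automatically, because the explicit profile already carries the correct leading asymptotics and the fixed point can only change it at a subleading order.
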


We begin by looking  at a family of  explicit positive radial solutions on the unit ball centred at the origin which is taken from \cite{ACL2018}. 
\begin{exam} \label{example}  (\cite{ACL2018}) Let $B_1$ denote the unit ball centered at the origin in $ \IR^N$ for $N \geq 3$. Then for $ \frac{N}{N-1}<p<2$ we  define $\alpha:= (p-1)(N-1)$, $\beta:= \frac{p-1}{\alpha-1}, \sigma:=\frac{2-p}{p-1} $ and note $ \alpha >1$. Then 
\[ u_t(r) := \int_r^1 \frac{dy}{(\beta y+t y^\alpha  )^{1/(p-1)}}, \qquad t> -\beta,\] 
  is a positive singular solution of (\ref{eq_intro}) in the case of $g=0$.  
  \end{exam} 
  
  \begin{remark} \begin{enumerate}
      \item \textbf{The parameters.} 
  For the remaining sections of this work that deal with results on bounded domains we impose the parameter values from Example \ref{example}.   This includes all of the material in the Introduction also.  
  
  \item \textbf{The exterior problem.}  In Section \ref{exterior}, where we deal with exterior domains, some of the parameters will differ.  The crucial difference there will be that value of $\sigma$.   We will indicate the new values of the parameters in Section \ref{exterior}.   For an explicit solution of the exterior problem on $B_1^c$ see Example \ref{exam_ext}.
      
  \end{enumerate}
  \end{remark}

  \begin{remark} \begin{enumerate} \item  In a previous work (see \cite{ACL2018}) we linearized around $ u_t$ with $ t=0$ (whose linearized operator is given by $L_0$) to obtain solutions of perturbations of (\ref{eq_intro_2}) in the case of $\Omega = B_1$.   This allowed us to obtain singular solutions for (\ref{eq_intro_2}) for domains which are small perturbations of the unit ball.  It would also allow us to obtain solutions of (\ref{eq_intro}) in the case of $g$ satisfying a smallness condition.   This was also done for systems and a $p$-Laplace version, see  \cite{Cowanr, Cowanr1}.   The main new ingredient in the current work is to  linearize around the solution $u_t$ on the unit ball for $ t$ large.     This solution is no longer scale invariant and it is exactly this that allows us to remove any smallness condition on $g$ and in the case of general domains we don't need to consider perturbations of the ball.    See \cite{MP} Remark 3 for a similar statement.   
  \item Example \ref{example} is only one range of $p$ taken from an example in \cite{ACL2018}.  Many of the results here on bounded domains can be extended to the other ranges of $p$. 
  
  \end{enumerate}

\end{remark}

\subsection{Background}  A well studied problem is the existence versus non-existence of positive solutions of the Lane-Emden equation given by
\begin{eqnarray} \label{lane}
 \left\{ \begin{array}{lcl}
\hfill   -\Delta u    &=& u^p  \qquad \mbox{ in }  \Omega,  \\
\hfill u &=& 0 \hfill \mbox{ on }  \pOm,
\end{array}\right.
  \end{eqnarray} where $1<p$ and $ \Omega$ is a bounded domain in $ \IR^N$ (where $N \ge 3$) with smooth boundary.   In the subcritical case $ 1<p<\frac{N+2}{N-2}$  the problem is very well understood and $H_0^1(\Omega)$ solutions are classical solutions; see \cite{gidas}.  In the case of $ p \ge \frac{N+2}{N-2}$  there are no classical positive solutions in the case of the domain being star-shaped; see \cite{POHO}.  In the case of non star-shaped domains much less is known; see for instance \cite{Coron, M_1, M_2, M_3, Passaseo}.   In the case of $ 1<p<\frac{N}{N-2}$ ultra weak solutions (non $H_0^1$ solutions) can be shown to be classical solutions.  For $ \frac{N}{N-2}<p< \frac{N+2}{N-2}$  one cannot use elliptic regularity to show ultra weak solutions are classical.  In particular in \cite{MP} for a general bounded domain in $ \IR^N$  they construct singular ultra weak solutions with a prescribed singular set, see the book  \cite{Pacard} for more details on this. 


We now return to (\ref{eq_intro}).  The first point is that it is a non variational equation and hence there are various standard tools which are not available anymore.  The case $0<p<1$ has been studied in \cite{ACL}.       Some relevant monographs for this work include \cite{GT,GhRa2,STRUWE}.
Many people have studied boundary blow up  versions of (\ref{eq_intro}) where one removes the minus sign in front of the Laplacian;  see for instance  \cite{LaLi,ZZha}.
 See \cite{ACL, ABTP,ACTMOP,ACJT,ACJT2,BBM,BHV,BHV2, BHV3,ChC,FeMu,FePoRa,GPS,GPS2,GMP,GrTr,PS,Lions,MN,Ng,NV} for more results on equations similar to (\ref{eq_intro}). In particular, the interested reader is referred to \cite{Ng} for recent developments and a bibliography of significant earlier work, where the author studies isolated
singularities at $0$ of nonnegative solutions of the more general quasilinear equation
\[\Delta u=|x|^\alpha u^p+|x|^\beta |\nabla u|^q~~in~~\Omega\setminus\{0\},\]
where $\Omega\subset R^N$ ($N>2$) is a $C^2$ bounded domain containing the origin $0$, $\alpha>-2$, $\beta>-1$ and $p,q>1$, and provides a full classification of positive solutions  vanishing on $\partial \Omega$ and the removability of isolated singularities. \\

Before outlining our approach we mention that our work is heavily inspired by the works
\cite{pert_ori,MP,Pacard,davila,davila_fast,pert_wei, small_hole}.  Many of these works consider variations of  $-\Delta u = u^p$ on the full space or an exterior domain.  Their approach is to find an approximate solution and then to linearize around the approximate solution to find a true solution.   This generally involves a very detailed linear analysis of the linearized operator associated with approximate solution and then one applies a fixed point argument to find a true solution.

\subsection{Outline of approach.}

To give a brief outline of our approach we consider (\ref{eq_intro}), which is the cleanest case to consider since there are no cut-off functions needed.    We look for solutions of the form $u(x)= u_t(x) + \phi(x)$ (where $ \phi$ is the unknown and where we will end up taking  $ t$ large).  For $u$ to satisfy (\ref{eq_intro}) it is sufficient that $\phi$ satisfy
\begin{eqnarray} \label{eq_phi_intro}
 \left\{ \begin{array}{lcl}
\hfill  L_t(\phi)   &=& g(x)| \nabla u_t + \nabla \phi|^p + \left\{ | \nabla u_t + \nabla \phi|^p - | \nabla u_t|^p - p | \nabla u_t|^{p-2}  \nabla u_t \cdot \nabla \phi \right\}\qquad \mbox{ in }  B_1 \backslash \{0\},  \\
\hfill \phi &=& 0 \hfill \mbox{ on }  \partial B_1,
\end{array}\right.
  \end{eqnarray}  where 
  \[ L_t(\phi):=-\Delta \phi - p| \nabla u_t|^{p-2} \nabla u_t \cdot \nabla \phi,\] which is just the linearized operator associated with the solution $u_t$ of the unperturbed equation.  A computation shows that we have the explicit formula 
  \[ L_t(\phi)(x)=-\Delta \phi(x) + \frac{ p x \cdot \nabla \phi(x)}{ \beta |x|^2+ t |x|^{\alpha+1}}.\]    We now define the norms we will use for (for the problem on $B_1$); 
  \[ \|f \|_Y:=\sup_{B_1} |x|^{\sigma+2} |f(x)|, \quad \| \phi \|_X:= \sup_{B_1} \left\{ |x|^\sigma | \phi(x)| + |x|^{\sigma+1} | \nabla \phi(x)| \right\},\]  and we denote $Y,X$ as the appropriate spaces; for the space $X$ we impose the boundary condition $ \phi=0$ on $ \partial B_1$.       To obtain a solution $\phi$ of (\ref{eq_phi_intro}) we will find a fixed point of the following mapping: $ T_t(\phi)=\psi$ where 
\begin{eqnarray} \label{eq_phi_intro_mapp}
 \left\{ \begin{array}{lcl}
\hfill  L_t(\psi)   &=& g(x)| \nabla u_t + \nabla \phi|^p   + \left\{ | \nabla u_t + \nabla \phi|^p - | \nabla u_t|^p - p | \nabla u_t|^{p-2}  \nabla u_t \cdot \nabla \phi \right\}\qquad \mbox{ in }  B_1 \backslash \{0\},  \\
\hfill \psi &=& 0 \hfill \mbox{ on }  \partial B_1.
\end{array}\right.
  \end{eqnarray}  In the end we will show that $T_t$ is a contraction on $B_R$ (the closed ball of radius $R$ centred at the origin in $X$) and hence we can apply Banach's fixed point theorem.    This will give the existence of $ \phi$ and then we will argue that $u(x)= u_t(x) + \phi(x)$ is positive in $B_1$.     A crucial point is that $u_t$ converges to zero outside of the origin and hence we will be able to view the term $g(x)| \nabla u_t + \nabla \phi|^p$ as small since $g(0)=0$;  which allows us not to impose any smallness assumption on $g$.    
  
 \subsubsection{Outline of article.} 
  
  The approach outlined above makes up Section \ref{linear_ball_sect}, which contains the linear theory, and Section \ref{section_fixed_ball}, which contains the fixed point argument.   
  
  In Section \ref{gen_dom_sec}  we consider (\ref{eq_intro_2}) on bounded domains.    The needed linear theory here will come from the linear theory on $B_1$ coupled with a gluing argument.  Section \ref{gen_dom_sec} also contains the needed fixed point argument, which is more involved then it was for (\ref{eq_intro}). 
  
 In Section \ref{exterior} we examine (\ref{eq_intro_2}) in the case of exterior domains.   Here the needed linear theory can come via perturbing the Laplacian on a general exterior domain.  The theory here involves a different choice of weight $ \sigma$ then on the bounded domain case.     The fixed point argument here follows essentially the fixed point arguments used in Section \ref{gen_dom_sec}.

  \section{The linear operator $L_t$ on $B_1$}  \label{linear_ball_sect}
  In this section we examine the linear operator $L_t$ on $B_1$ and we now state our main result regarding this.
  \begin{prop}  \label{main_linear} There is some $C>0$ and $t_0$ (large) such that for all  $f \in Y$ there is some $ \phi \in X$ such that 
  \begin{eqnarray} \label{lin_f}
 \left\{ \begin{array}{lcl}
\hfill   L_t(\phi)  &=& f \qquad \mbox{ in }  B_1 \backslash \{0\},  \\
\hfill  \phi &=& 0 \hfill \mbox{ on }  \partial B_1.
\end{array}\right.
  \end{eqnarray}  Moreover one has the estimate $ \| \phi \|_X \le C \|f\|_Y$. 
   \end{prop}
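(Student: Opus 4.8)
\noindent\emph{Plan of proof.} The plan is to use two structural features of $L_t$: it is in weighted divergence form, and on each spherical harmonic sector it reduces to an explicitly solvable ordinary differential equation. First I would record the identity
\[
L_t(\phi)=-\frac{1}{w}\,\mathrm{div}\!\bigl(w\,\nabla\phi\bigr),\qquad w(r):=\frac{(\beta r+tr^{\alpha})^{p/(p-1)}}{r^{\alpha p/(p-1)}},
\]
which follows from $\nabla\log w=-p\,x\,(\beta|x|^{2}+t|x|^{\alpha+1})^{-1}$ together with $\int p\,(\beta s+ts^{\alpha})^{-1}\,ds=\tfrac{p}{p-1}\log\bigl(s^{\alpha-1}/(\beta+ts^{\alpha-1})\bigr)$ and the relation $(\alpha-1)\beta=p-1$. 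Expanding $f=\sum_{k\ge 0}f_{k}(r)Y_{k}$ in an $L^{2}(S^{N-1})$ orthonormal basis of spherical harmonics, with $-\Delta_{S^{N-1}}Y_{k}=\mu_{k}Y_{k}$, and looking for $\phi=\sum_{k\ge 0}\phi_{k}(r)Y_{k}$ (the drift acts only on the radial factor, so the sectors decouple), equation \eqref{lin_f} becomes the family of Sturm--Liouville problems
\[
-\bigl(r^{N-1}w\,\phi_{k}'\bigr)'+\mu_{k}\,r^{N-3}w\,\phi_{k}=r^{N-1}w\,f_{k}\quad\text{on }(0,1),\qquad \phi_{k}(1)=0,
\]
together with a suitable restriction on the behaviour of $\phi_{k}$ at $r=0$.

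For the radial mode $k=0$ the equation is first order in $\phi_{0}'$, so I would integrate it twice explicitly, taking the free constant equal to $0$, to get $\phi_{0}'(r)=\bigl(r^{N-1}w(r)\bigr)^{-1}\int_{r}^{1}s^{N-1}w(s)f_{0}(s)\,ds$ and $\phi_{0}(r)=-\int_{r}^{1}\phi_{0}'$. Using $w(r)\asymp r^{-\gamma}$ near $0$ with $\gamma=(\alpha-1)p/(p-1)$, $w\asymp 1$ near $1$, and $|f_{0}(r)|\le\|f\|_{Y}r^{-\sigma-2}$, a direct computation then yields $|\phi_{0}(r)|\le C\|f\|_{Y}r^{-\sigma}$ and $|\phi_{0}'(r)|\le C\|f\|_{Y}r^{-\sigma-1}$. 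The only arithmetic that matters is that $N-1-\gamma=2-\alpha+\sigma$, so the integrand $s^{N-1}w(s)f_{0}(s)$ is of size $s^{-\alpha}$ near $0$; because $\alpha>1$ the inner integral is improper at $0$, and that is exactly what produces the weight $r^{-\sigma}$. (This is the point at which $\tfrac{N}{N-1}<p<2$ enters.)

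For $k\ge1$ I would solve the Sturm--Liouville problem by variation of parameters, using the fundamental solution $\phi_{k}^{(1)}$ vanishing at $r=1$ and the one $\phi_{k}^{(0)}$ subdominant at $r=0$. The origin is a regular singular point, and after freezing coefficients there the homogeneous equation is the Euler equation $\phi''+\tfrac{N-1-\gamma}{r}\phi'-\tfrac{\mu_{k}}{r^{2}}\phi=0$ with indicial roots $\nu_{k}^{\pm}=\tfrac12\bigl(-(N-2-\gamma)\pm\sqrt{(N-2-\gamma)^{2}+4\mu_{k}}\bigr)$; one has $\nu_{k}^{+}>0>-\sigma$ for every $k\ge1$, so the selected solution lies in $X$, while $\nu_{k}^{-}\to-\infty$, so the discarded branch eventually fails to lie in $X$ and the selection is forced. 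Estimating the Green's kernel against the weights $r^{\sigma}$, $r^{\sigma+1}$ (case split according to $r\lessgtr s$, and using $\nu_{k}^{+}+\nu_{k}^{-}=\alpha-1-\sigma$ and the gain $(1+\mu_{k})^{-1}$ from the potential term) gives a mode estimate $\sup_{(0,1)}\bigl(r^{\sigma}|\phi_{k}|+r^{\sigma+1}|\phi_{k}'|\bigr)\le C(1+\mu_{k})^{-1}\sup_{(0,1)}r^{\sigma+2}|f_{k}|$ with $C$ independent of $k$ and of $t\ge t_0$. Rather than sum in $L^{\infty}$, I would note that the $L^{2}(S^{N-1})$-in-$\theta$, weighted-$L^{\infty}$-in-$r$ versions of these bounds sum by Plancherel to a weighted $L^{2}$ estimate for $\phi$, and then upgrade to the stated $X$-bound by a rescaling argument: for $|x_{0}|=\rho$ small, $v(y):=\rho^{\sigma}\phi(\rho y)$ solves $-\Delta v+\frac{p\,y\cdot\nabla v}{\beta|y|^{2}+t\rho^{\alpha-1}|y|^{\alpha+1}}=\rho^{\sigma+2}f(\rho y)$ on $B_{1/2}(x_{0}/\rho)$, where the operator is uniformly elliptic with smooth coefficients (uniformly in $\rho$ small and $t\ge t_0$, since $|y|\asymp1$ there), so interior elliptic estimates bound $v$ and $\nabla v$ by $\|v\|_{L^{2}(B_{1/2})}+\rho^{\sigma+2}\sup|f(\rho\cdot)|\le C\|f\|_{Y}$. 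Regularity and the estimate up to $\partial B_{1}$ are routine, since there $L_{t}$ is uniformly elliptic with smooth coefficients.

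I expect the higher-mode analysis together with the reassembly to be the main obstacle: one must control the corrections to the pure Euler behaviour at the origin (the weight $w$ is only asymptotically a power of $r$, and for small $k$ both indicial roots may exceed $-\sigma$, forcing a short case analysis), verify that the selected solution lands in $X$ rather than in a strictly more singular weighted class, and keep every constant uniform in $k$ and in $t\ge t_0$. Taking $t$ large is what makes the last point clean: then $L_{t}$ is a small perturbation of $-\Delta$ outside any fixed neighbourhood of the origin, so all the ``far from $0$'' parts of the estimates --- including the rescaled interior estimates above --- hold with $t$-independent constants. An alternative to the explicit spherical-harmonic computation would be to patch the resolvent of $-\Delta$ on $B_{1}\setminus B_{\rho}$ with the scale-invariant theory for $L_{0}$ from \cite{ACL2018} on $B_{\rho}$, with $\rho\sim t^{-1/(\alpha-1)}$, and iterate over the overlap.
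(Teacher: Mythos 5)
Your divergence-form rewriting of $L_t$ is correct and pleasant, and your decomposition into the spherical-harmonic sectors is the same decomposition the paper uses. For the higher modes $k\ge1$ your route (explicit variation of parameters, estimates on the Green's kernel, and a Plancherel-plus-rescaling reassembly) is genuinely different from the paper's: the paper instead proves a Liouville theorem for the kernel of $L_t$ in the space $X_1$ (no $k=0$ mode) and then obtains the $t$-uniform a priori bound by a compactness/blow-up argument against $L_0$ (small scales) and $L_\infty=-\Delta$ (finite scales). Either strategy is plausible; yours trades the Liouville step for explicit indicial-root bookkeeping. You are also right that for $k\ge1$ the discarded branch $r^{\nu_k^-}$ always leaves $X$: a direct computation gives $\nu_1^-+\sigma=-1$ (so your worry that ``both indicial roots may exceed $-\sigma$'' is unfounded).

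The real gap is in the $k=0$ mode, and it is a concrete error, not a missing detail. You propose
\[
\phi_0'(r)=\frac{1}{r^{N-1}w(r)}\int_r^1 s^{N-1}w(s)f_0(s)\,ds ,
\]
i.e.\ you fix the free constant by imposing $\phi_0'(1)=0$. This does produce a solution, but the resulting bound on $r^{\sigma+1}|\phi_0'(r)|$ is \emph{not} uniform in $t$, and uniformity in $t\ge t_0$ is exactly what the Proposition asserts (and what the later contraction argument needs). The failure is invisible in your heuristic ``$w\asymp r^{-\gamma}$ near $0$, $w\asymp1$ near $1$'' precisely because the transition between those two regimes occurs at the $t$-dependent scale $R_t:=t^{-1/(\alpha-1)}$, which is where the trouble sits. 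Concretely, write $\mu(r)=r^{N-1}w(r)/(\beta+t)^{p/(p-1)}$. Then
\[
\Big|\int_{R_t}^1\mu f_0\Big|\ \lesssim\ \frac{\|f\|_Y\,t^{p/(p-1)}}{(\beta+t)^{p/(p-1)}}\int_{R_t}^1 s^{(\alpha-p)/(p-1)}\,ds\ \lesssim\ \|f\|_Y\quad(t\ \mathrm{large}),
\]
while at $r=R_t$ one has $\dfrac{R_t^{\sigma+1}}{\mu(R_t)}=\dfrac{1}{t}\Big(\dfrac{\beta+t}{\beta+1}\Big)^{p/(p-1)}\asymp t^{1/(p-1)}$. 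Hence the piece of your formula coming from $\int_{R_t}^1$ contributes $\asymp t^{1/(p-1)}\|f\|_Y$ to $r^{\sigma+1}|\phi_0'(r)|$ near $r=R_t$, and $1/(p-1)>1$ since $p<2$. Equivalently, your $\phi_0$ differs from the paper's by the multiple $c_t\,\psi_t$ of the radial kernel element $\psi_t=-\partial_t u_t$, with $|c_t|\asymp(\beta+t)^{p/(p-1)}|\int_{R_t}^1\mu f_0|\asymp t^{p/(p-1)}\|f\|_Y$ and $\|\psi_t\|_X\asymp t^{-1}$, so the discrepancy has $X$-norm $\asymp t^{1/(p-1)}\|f\|_Y\to\infty$. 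The paper's Lemma on the $k=0$ mode sidesteps this by making the constant of integration $t$-dependent: it takes $a'_t(1)=\int_{R_t}^1\mu_t b$, equivalently
\[
a'_t(r)=\frac{1}{\mu_t(r)}\int_{R_t}^r\mu_t(\tau)\,b(\tau)\,d\tau ,
\]
and this choice (not integration from $1$, not integration from $0$) is what makes the bound uniform, by design: in each regime $r<R_t$ and $r>R_t$ the integrand is tamed by one of the two behaviours of $\mu_t$. Without this $t$-dependent normalisation the Proposition is simply false along your proposed representative, so this step cannot be left as ``a direct computation then yields.''

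As written your plan would also need to be completed on the $k\ge1$ side (the constants in your variation-of-parameters estimates must be checked uniform in both $k$ and $t$, and the reassembly step is only sketched), but those are executable gaps. The $k=0$ normalisation is the one conceptual ingredient you are missing.
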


  One should note that, at least formally,  that $ \partial_t u_{t}(r)|_{t=1}$ is in the kernel of $L_t$ on $B_1$. In fact this is the case and  if we set 
  \[ \psi_t(r):= -\partial_t u_{t}(r) = \frac{1}{p-1} \int_r^1 \frac{y^\alpha}{\left( \beta y + t y^\alpha \right)^\frac{p}{p-1}} dy,\] 
   then $ \psi_t \in X$ and satisfies $L_t(\psi_t)=0$ in $B_1 \backslash\{0\}$ with $ \psi_1=0$ on $ \partial B_1$.     \\ 
   
   \noindent \emph{Spherical harmonics.}  Consider the eigenpairs $ (\psi_k,\lambda_k)$ of the Laplace-Beltrami operator $\Delta_\theta=\Delta_{S^{N-1}}$ on $S^{N-1}$ which satisfy 
   \begin{equation*} 
   -\Delta_\theta \psi_k(\theta) = \lambda_k \psi_k(\theta), \quad \mbox{  in } \; \theta \in S^{N-1},
   \end{equation*} which we normalize $ \| \psi_k \|_{L^2(S^{N-1})}=1$.    Note that $ \psi_0=1$, $ \lambda_0=0$ (multiplicity $1$);  $ \lambda_1 = N-1$ (multiplicity $N$);  $ \lambda_2 =2N$.  \\
   
   Given a $ f \in Y$,  $ \phi \in X$ we write 
   \[ f(x) = \sum_{k=0}^\infty b_k(r) \psi_k(\theta), \qquad \phi(x)=\sum_{k=0}^\infty a_k(r) \psi_k(\theta),\]  and note $ a_k(1)=0$ after considering the boundary condition of $\phi$.    A computation shows that  $ \phi$ satisfies (\ref{lin_f}) provided $a_k$ satisfies 
   \begin{equation} \label{non-homo-ode}
   -a_k''(r)-\frac{(N-1) a_k'(r)}{r} + \frac{\lambda_k a_k(r)}{r^2} + \frac{ p a_k'(r)}{\beta r + t r^\alpha} = b_k(r), \quad \mbox{ for }  0<r<1,
   \end{equation} with $ a_k(1)=0$.    Since we already developed a theory for  the linear operator $L_0$ in \cite{ACL2018} we prefer to utilize some continuation arguments to obtain results for $L_t$.    This will work sufficiently well except one needs to be a bit careful since we recall that $\psi_t$ is in the kernel of $L_t$.     Noting that $ \psi_t$ is is radial one sees this solves the homogenous version of (\ref{non-homo-ode}) when $k=0$.  For the $k=0$ mode we will need to solve (\ref{non-homo-ode}) directly, see Lemma \ref{k=0 mode}. We now define some spaces to remove this problematic $k=0$ mode.  
Define the closed subspaces of  $X$ and $Y$ by   
   \[ X_1:= \left\{ \phi \in X:   \phi(x)=\sum_{k=1}^\infty a_k(r) \psi_k(\theta) \right\}, \qquad Y_1:= \left\{ f \in Y:  f(x) = \sum_{k=1}^\infty b_k(r) \psi_k(\theta) \right\},\]  note the sums start at $k=1$ and not $k=0$.  
   We begin by stating a few results from \cite{ACL2018}.    In what follows we will be working on $B_R$ or $ \IR^N$ and the spaces $X$ and $X_1$ are obvious extensions of the definitions to these more general settings.

   \begin{lemma} (\cite{ACL2018}).   
   \begin{enumerate}  \item  Let $ 0<R \le \infty$ and  suppose $ \phi \in X_1$ is such that $L_0(\phi)=0$ in $B_R \backslash \{0\}$ with $ \phi=0$ on  $ \partial B_R$ in the case of $R$ finite.      Then $ \phi=0$.  
   \item Proposition \ref{main_linear}  holds if one replaces $L_t$ with $L_0$. 
   \end{enumerate} 
   \end{lemma}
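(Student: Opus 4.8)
\emph{Proof proposal.} The plan is to reduce everything to one-dimensional analysis by separation of variables, exploiting that $L_0$ is scale invariant (homogeneous of degree $-2$ under dilations). Writing $\phi=\sum_{k\ge0}a_k(r)\psi_k(\theta)$ and $f=\sum_{k\ge0}b_k(r)\psi_k(\theta)$, the equation $L_0(\phi)=f$ in $B_R\backslash\{0\}$ becomes, mode by mode, the Euler-type ODE
\[
-a_k''(r)-\mu\,\frac{a_k'(r)}{r}+\frac{\lambda_k\,a_k(r)}{r^2}=b_k(r),\qquad \mu:=N-1-\frac{p}{\beta},
\]
whose indicial equation $m^2+(\mu-1)m-\lambda_k=0$ has the two real roots $m_k^\pm=\tfrac12\bigl(-(\mu-1)\pm\sqrt{(\mu-1)^2+4\lambda_k}\,\bigr)$ (distinct for $k\ge1$), so the homogeneous solutions are $r^{m_k^+}$ and $r^{m_k^-}$ with $m_k^+>m_k^-$. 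Using $\beta=\tfrac{p-1}{\alpha-1}$ and $\alpha=(p-1)(N-1)$ one computes $\mu-1=N-2-\tfrac{p}{\beta}=\tfrac{1}{p-1}-(N-1)(p-1)$, and then — the key identity — $m_1^-=-\tfrac{1}{p-1}=-\sigma-1$ and $m_1^+=(p-1)(N-1)=\alpha$ (this $m_1^-$ is the exponent of $\partial_{x_i}u_0\sim c\,r^{-1/(p-1)}\psi_1$, which lies in the kernel of $L_0$ but not in $X$). Since $\lambda_k$ is nondecreasing in $k$, this gives $m_k^-\le m_1^-=-\sigma-1<-\sigma$ for every $k\ge1$, whereas $m_k^+>0>-\sigma$ for every $k\ge1$ because $p<2$ forces $\sigma=\tfrac{2-p}{p-1}>0$; in particular $-\sigma$ is never an indicial exponent of any mode $k\ge1$.

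For part (1), fix $k\ge1$ and write the solution as $a_k=c_k^+r^{m_k^+}+c_k^-r^{m_k^-}$. Since $\phi\in X_1$ forces $\sup_{0<r<R}r^{\sigma}|a_k(r)|<\infty$ and $m_k^-<-\sigma$, we must have $c_k^-=0$, so $a_k=c_k^+r^{m_k^+}$. If $R<\infty$ the boundary condition $a_k(R)=0$ with $R>0$ gives $c_k^+=0$. If $R=\infty$, then $r^{\sigma}|a_k(r)|=|c_k^+|\,r^{\sigma+m_k^+}$ is bounded on $(0,\infty)$ only if $m_k^+\le-\sigma<0$, impossible since $m_k^+>0$; hence $c_k^+=0$ again. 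Thus all $a_k$ vanish and $\phi\equiv0$.

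For part (2) one solves each mode equation on $(0,1)$ with $a_k(1)=0$ by variation of parameters against $r^{m_k^\pm}$. Because $-\sigma$ lies strictly between the two indicial exponents, there is a canonical particular solution whose $r^{m_k^-}$ component is cancelled as $r\to0$, and the corresponding Green kernel is bounded in the weight $r^{-\sigma}$ uniformly in $k$; from $|b_k(r)|\lesssim \|f\|_Y\,r^{-\sigma-2}$ one then obtains $|a_k(r)|\lesssim r^{-\sigma}$ and $|a_k'(r)|\lesssim r^{-\sigma-1}$ with constants independent of $k$, with an extra gain for large $k$ coming from the coercive term $\lambda_k r^{-2}a_k$. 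Reassembling $\phi=\sum_k a_k\psi_k$ — treating the finitely many low modes individually and controlling the high-mode tail by that gain — gives $\phi\in X$ with $L_0\phi=f$ in $B_1\backslash\{0\}$, $\phi=0$ on $\partial B_1$, and $\|\phi\|_X\le C\|f\|_Y$. Since all of this is already carried out in \cite{ACL2018}, one may alternatively just invoke it.

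I expect the genuine work to lie in part (2): the per-mode variation-of-parameters estimates are routine, but making the weighted bounds \emph{uniform in the spherical mode $k$} and then reassembling the series in the sup-norm topology near $r=0$ takes care. The saving grace is that $m_1^-=-\sigma-1$ leaves a full unit of room below the weight $-\sigma$, so no low mode is resonant and the only truly delicate estimates are the high-mode ones, where the $\lambda_k r^{-2}$ term works in our favour. Part (1), by contrast, is essentially just the indicial-root computation above, whose only subtle point is checking — as we did — that $-\sigma$ is never an indicial exponent for $k\ge1$.
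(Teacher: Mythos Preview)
Your proposal is correct and follows essentially the same route as the paper. The paper, like you, reduces part~(1) to the Euler ODE for each mode $k\ge1$, identifies the indicial exponents $\gamma_k^\pm$ (your $m_k^\pm$), verifies $\gamma_1^-+\sigma=-1$ and $\gamma_1^++\sigma>0$, and then eliminates both coefficients using the weighted bound near $r=0$ together with either the boundary condition $a_k(R)=0$ or the bound as $r\to\infty$; the only cosmetic difference is that for finite $R$ the paper imposes $a_k(R)=0$ first and then the growth condition, while you do the reverse. For part~(2) the paper gives no argument at all and simply invokes \cite{ACL2018}, so your variation-of-parameters sketch already goes further than what is written there.
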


   \begin{proof} For the convenience of the reader we prove part 1. 
    We write $ \phi(x)=\sum_{k=1}^\infty a_k(r) \psi_k(\theta)$ and so $a_k$ satisfies 
  \[ a_k''(r) + \frac{(N-1) a_k'(r)}{r} - \frac{pa_k'(r)}{\beta r} - \frac{\lambda_k a_k(r)}{r^2}=0, \quad \mbox{ for } \;  0<r<R,\] with $ a_k(R)=0$ in the case of $R<\infty$.    Also we have $ \sup_{0<r<R} \left\{ r^\sigma |a_k(r)| r^\sigma + r^{\sigma+1} | a'_k(r)| \right\} <\infty$. Note this ode is of Euler form and hence its solutions  are $a_k(r) = C_k r^{\gamma_k^+} + D_k r^{\gamma_k^-}$ for some $C_k,D_k \in \IR$ where $ \gamma_k^\pm$ are the roots of 
  \[ \gamma^2 +(N-2 - \frac{p}{\beta}) \gamma - \lambda_k=0,\]  which are given by 
  \[ \gamma_k^{\pm}= \frac{-(N-2- \frac{p}{\beta})}{2} \pm  \frac{ \sqrt{ (N-2 - \frac{p}{\beta})^2 + 4 \lambda_k}  }{2}.\] 
  
  A computation shows that $ \gamma_1^-+\sigma=-1$ and so we have $\gamma_k^- +\sigma \le -1$ for $ k \ge 1$.      We first consider the case where $0<R<\infty$.    To satisfy $a_k(R)=0$ we see there is some $ \alpha_k(R)  \neq 0$ and $ \tilde{C}_k \in \IR$ such that $ a_k(r)= \tilde{C}_k \left( \alpha_k(R) r^{\gamma_k^+} + r^{\gamma_k^-} \right)$.  Now since $k \ge 1$ we have $ \gamma_k^-+\sigma \le -1 <0$ and so to have $a_k$ in the appropriate space we must have $\tilde{C}_k=0$.     Now we consider the case of $R=\infty$. In this case we have $a_k(r) = C_k r^{\gamma_k^+} + D_k r^{\gamma_k^-}$ and provided $ \gamma_k^+ +\sigma \neq 0$ and $ \gamma_k^-+\sigma \neq 0$ we can send $ r \rightarrow 0,\infty$ to see we must have $C_k=D_k=0$ for $a_k$ to be in the required space.   So to complete the proof we only need to verify $ \gamma_k^+ +\sigma \neq 0$. A computation shows that 
  \[ \sigma+\gamma_1^+= \frac{(N-1)p^2 +p(-2N+1)+N+1}{p-1}>0,\] and the desired result follows by monotonicity in $k$.
  \end{proof}

  

 \begin{lemma} \label{kernel_t} (Kernel of $L_t$ in $X_1$)  Let $0<R \le \infty$, $ t \in (0,\infty]$ and $ \phi \in X_1$  with $L_t(\phi)=0$ in $B_R \backslash \{0\}$ with $ \phi=0$ on $ \partial B_R$ in the case of $R$ finite.  Then $ \phi=0$. 
 \end{lemma}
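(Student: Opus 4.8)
The plan is to follow the same route as part 1 of the preceding lemma, but with the Euler equation replaced by a perturbed equation having a regular singular point at the origin, and to finish with the maximum principle, which is available because $L_t$ has no zeroth order term. Expanding $\phi(x)=\sum_{k\ge 1}a_k(r)\psi_k(\theta)$, each $a_k$ solves the homogeneous version of (\ref{non-homo-ode}),
\[
-a_k''(r)-\frac{(N-1)a_k'(r)}{r}+\frac{\lambda_k a_k(r)}{r^2}+\frac{p\,a_k'(r)}{\beta r+t r^\alpha}=0,\qquad 0<r<R,
\]
(the last term being absent when $t=\infty$, where $L_\infty=-\Delta$), with $a_k(R)=0$ when $R<\infty$ and with $\sup_{0<r<R}\{r^\sigma|a_k(r)|+r^{\sigma+1}|a_k'(r)|\}<\infty$. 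It suffices to prove $a_k\equiv 0$ for every $k\ge 1$, and the argument splits into a study of the endpoint behaviour of $a_k$ followed by a maximum principle.

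For the behaviour near $r=0$, rewrite the equation as $r^2 a_k''+rP(r)a_k'-\lambda_k a_k=0$ with $P(r)=(N-1)-\tfrac{p}{\beta+tr^{\alpha-1}}$. Since $\alpha>1$, $P$ is smooth on a neighbourhood of $0$ with $P(0)=N-1-\tfrac p\beta$, so $r=0$ is a regular singular point whose indicial equation $\gamma^2+(N-2-\tfrac p\beta)\gamma-\lambda_k=0$ has exactly the roots $\gamma_k^\pm$ of the preceding lemma (for $t=\infty$ the corresponding Euler roots are those of $\gamma^2+(N-2)\gamma-\lambda_k=0$). By the standard asymptotic theory of regular singular points, any solution is, as $r\to 0$, asymptotic to a nonzero multiple of $r^{\gamma_k^+}$, or, if that component vanishes, to a nonzero multiple of $r^{\gamma_k^-}$ possibly times a power of $\log r$. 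Because $\gamma_1^-+\sigma=-1$ and $\gamma_k^-$ decreases in $k$, we have $\gamma_k^-+\sigma\le -1<0$ for $k\ge1$ (and likewise $\tilde\gamma_1^-+\sigma=1-N+\sigma<0$ in the Laplacian case, since $0<\sigma<N-1$), so a nonvanishing $r^{\gamma_k^-}$ component would force $r^\sigma|a_k(r)|\to\infty$, contradicting the assumed bound. Hence $a_k(r)=(c_k+o(1))r^{\gamma_k^+}$ as $r\to 0$, and since $\lambda_k>0$ gives $\gamma_k^+>0$, $a_k$ extends continuously to $r=0$ with $a_k(0)=0$.

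At the other endpoint there is nothing to do when $R<\infty$, since $a_k(R)=0$; when $R=\infty$ the bound $r^\sigma|a_k(r)|\le C$ together with $\sigma=\tfrac{2-p}{p-1}>0$ (the parameter values of Example \ref{example}) gives $a_k(r)\to 0$ as $r\to\infty$. Thus $a_k$ is $C^2$ on $(0,R)$, continuous on the closed (compactified, if $R=\infty$) interval, vanishes at both ends, and solves $a_k''+q(r)a_k'-\tfrac{\lambda_k}{r^2}a_k=0$ with $\lambda_k>0$. If $a_k\not\equiv 0$, then after possibly replacing $a_k$ by $-a_k$ its supremum is a positive maximum attained at some interior $r_0\in(0,R)$; there $a_k'(r_0)=0$ and $a_k''(r_0)\le0$, while the equation gives $a_k''(r_0)=\tfrac{\lambda_k}{r_0^2}a_k(r_0)>0$, a contradiction. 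Hence $a_k\equiv0$ for all $k\ge1$, so $\phi=\sum_{k\ge1}a_k\psi_k\equiv0$.

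The only step requiring genuine work is the analysis at $r=0$: one must make rigorous that solutions of the perturbed, and in general non-analytic, equation still have the clean power asymptotics $r^{\gamma_k^\pm}$, so that the growth built into $X_1$ excludes the singular branch. This is exactly where $\alpha>1$ is used, guaranteeing that $\tfrac{p}{\beta r+t r^\alpha}-\tfrac{p}{\beta r}$ is a lower order correction at the singular point; with that in hand the rest is routine.
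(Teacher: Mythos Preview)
Your proof is correct but takes a genuinely different route from the paper's. You use regular-singular-point asymptotics at $r=0$ to force $a_k(r)\sim c_k r^{\gamma_k^+}$ (the $r^{\gamma_k^-}$ branch being excluded by the $X_1$ bound), hence $a_k(0)=0$, and then apply the maximum principle directly to $a_k$, which needs only $\lambda_k>0$. The paper instead changes variables to $w(\tau)=r^\sigma a_k(r)$, $\tau=\ln r$, and obtains the endpoint decay $w(\pm\infty)=0$ by a \emph{rescaling/blow-up} argument: if $r_m^\sigma|a_k(r_m)|\ge\varepsilon_0$ with $r_m\to 0$ (resp.\ $\infty$), the rescaled functions $a^m(r):=r_m^\sigma a_k(r_m r)$ converge to a nonzero solution of $L_0$ (resp.\ $L_\infty$) on $(0,\infty)$, contradicting the Liouville result already proved for $t=0$. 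The paper's maximum principle is then applied to $w$ and requires the explicit verification that $C_1(-\infty)=-(N-1)+p\sigma/\beta-\sigma(N-2-\sigma)<0$.

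Your approach is more classical and avoids that algebraic check entirely; its only nontrivial input is the Frobenius asymptotics for a non-analytic perturbation, which you flag, and which is indeed available (e.g.\ via the change $\tau=\ln r$ and Levinson's theorem, since $P(e^\tau)-P(0)=O(e^{(\alpha-1)\tau})$ is integrable as $\tau\to-\infty$). The paper's approach is self-contained---it uses only the $L_0$ kernel result already in hand---and the rescaling device is precisely what is reused in the proof of Proposition~\ref{higher_modes_linear}, so it is more in keeping with the surrounding machinery.
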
 
  
   \begin{proof}   Suppose $ R,t,\phi$ as in the hypthosis.  Further we suppose $ 0<t<\infty$  since  $L_\infty=-\Delta$, and this result is well known for the Laplacian.     We write $ \phi(x)=\sum_{k=1}^\infty a_k(r) \psi_k(\theta)$; note there is no $k=0$ mode since $ \phi \in X_1$.    Then $ a_k$ satisfies 
  \begin{equation} \label{ode_kernel_t}
  -\Delta a_k(r) + \frac{ \lambda_k}{r^2} a_k(r) + \frac{p a_k'(r)}{\beta r + t r^\alpha}=0, \quad 0<r<R,
  \end{equation}and in the case of $ R<\infty$ we have $a_k(R)=0$.   Moreover there is some $C_k>0$ such that \[ \sup_{0<r<R} \left\{ r^\sigma |a_k(r)|+ r^{\sigma+1} |a_k'(r)| \right\} \le C_k.\]  
  Fix $ k \ge 1$ and we set $ w(\tau):=r^\sigma a_k(r)$ where $ \tau=\ln(r)$.  Then a computation shows that $ w=w(\tau)$ satisfies 
  \begin{equation} \label{wave_k}
  0=w_{\tau \tau} + g(\tau) w_\tau + C_k(\tau) w,  \qquad \tau \in (-\infty, \ln(R)), 
  \end{equation}
  where 
  \[ g(\tau) = N-2-2\sigma - \frac{p}{\beta+ t e^{(\alpha-1) \tau}} \] 
   \[ C_k(\tau):= - \lambda_k +  \frac{p \sigma}{ \beta+ t e^{(\alpha-1) \tau}} - \sigma(N-2-\sigma).\]

  We now claim that one has the improved decay estimate; $ r^\sigma |a_k(r)|  \rightarrow 0$ as $ r \rightarrow 0$ and in the case of $ R=\infty$ that we have $ r^\sigma |a_k(r)| \rightarrow 0$ as $ r \rightarrow \infty$.      For the moment we assume we have the claim.   Then note this gives that $ w \rightarrow 0$ as $ \tau \rightarrow -\infty$ and in the case of $ R=\infty$ we have the same result when $ \tau \rightarrow \infty$.  
  
    By multiplying by $ -1$, if needed, we can assume that if $ w \neq 0$ (and since $ w(-\infty)=w( \ln(R))=0$ we can suppose there is some $ \tau_0 \in (-\infty, \ln(R))$ such that $ w(\tau_0)=\max w >0$.   Then we have $ w_{\tau \tau}(\tau_0) \le 0$ and $ w_\tau(\tau_0)=0$ and hence from the equation we get $ C_k(\tau_0) w(\tau_0) = - w_{\tau \tau}(\tau_0) \ge 0$.  From this we see that we must have $C_k(\tau_0) \ge 0$.  Using the monotonicity of $C_k$ in $ \tau$ and $k$ we see that for all $ \tau \in (-\infty, \ln(R))$ we have \[ C_k(\tau) \le C_k(-\infty) \le C_1(-\infty) = -(N-1)+ \frac{p \sigma}{\beta} - \sigma (N-2-\sigma)\] and this quantity can be seen to be negative after considering the restrictions on $p$.    Hence we must have $ w=0$ and hence $a_k=0$ for all $ k \ge 1$. 
  We now prove the the claimed decay estimates.  Fix $k \ge 1$ and set $ a(r)=a_k(r)$ so we have 
\[ -\Delta a(r) + \frac{\lambda_k a(r)}{r^2}+ \frac{p a'(r)}{\beta r + t r^\alpha} = 0,  \qquad \mbox{ in } \; 0<r<R,\] with $ a(R)=0$.   Suppose the claim is false.  Then there is some $r_m \rightarrow 0$ such that $ r_m^\sigma |a(r_m)| \ge \E_0>0$.  Define the rescaled functions $ a^m(r):= r_m^\sigma a(r_m r)$ and note $ |a_m(1)| \ge \E_0$ and $ r^\sigma |a^m(r)| \le C$. A computation shows that 
\[ -\Delta a^m(r) + \frac{\lambda_k a^m(r)}{r^2} + \frac{ (a^m)'(r)}{ \beta r+ t r_m^{\alpha-1} r^\alpha} =0, \quad \mbox{ in } \; 0<r<\frac{R}{r_m}.\]  Passing to the limit we can find some $a^\infty \neq 0$ with $ r^{\sigma} |a^\infty(r)| + r^{\sigma+1}| (a^\infty)'(r)| \le C$ which satisfies $L_0(a^\infty)=0$ in $0<r<\infty$, but this contradicts our earlier theory on $L_0$.   In the case of $ R=\infty$ the proof is similar,  but the limiting equation is $ L_\infty(a^\infty)=0$ in $ 0<r<\infty$. 
\end{proof}

  \begin{prop} \label{higher_modes_linear}(Linear theory for $L_t$ on $X_1$)  There is some $C>0$ and $ t_0$ (large) such that  for all $ t \ge t_0$ and $ f \in Y_1$ there is some $ \phi \in X_1$ which satisfies (\ref{lin_f}).  Moreover one has the estimate $\| \phi \|_X \le C \|f\|_Y$. 
  \end{prop}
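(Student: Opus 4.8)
The plan is to reduce Proposition \ref{higher_modes_linear} to the already-established linear theory for $L_0$ on $X_1$ (part 2 of the cited Lemma from \cite{ACL2018}, whose $L_0$-version of Proposition \ref{main_linear} when restricted to the symmetry-invariant subspaces $Y_1, X_1$ gives the estimate there) by a continuation-in-$t$ argument together with a uniform a priori estimate. Concretely, I would introduce the family of operators $L_t$ for $t \in [0, t_0]$ (or parametrize by $s = 1/t \in [0, s_0]$ near $s=0$ so that $t=\infty$, i.e. $L_\infty = -\Delta$, is included and the coefficient $\frac{p}{\beta r + t r^\alpha}$ varies continuously), and consider the set $\mathcal{T}$ of parameters for which the conclusion of the proposition holds with a \emph{fixed} constant $C$. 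The goal is to show $\mathcal{T}$ is nonempty, open, and closed in the relevant parameter interval, so it is the whole interval; then in particular large $t$ is covered.

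First I would establish the \textbf{key a priori estimate}: there exist $C > 0$ and $t_0$ large such that for all $t \ge t_0$, any $\phi \in X_1$ solving $L_t(\phi) = f$ in $B_1 \setminus \{0\}$ with $\phi = 0$ on $\partial B_1$ satisfies $\|\phi\|_X \le C\|f\|_Y$. This is proved by contradiction/compactness in the spirit of the decay-claim argument inside the proof of Lemma \ref{kernel_t}: suppose there are $t_m \to \infty$ (or $t_m \to$ some $\bar t$), $\phi_m \in X_1$, $f_m \in Y_1$ with $\|\phi_m\|_X = 1$ and $\|f_m\|_Y \to 0$. Working mode-by-mode in the spherical-harmonic expansion and rescaling around the radius where $|x|^\sigma|\phi_m| + |x|^{\sigma+1}|\nabla\phi_m|$ is nearly maximized, one extracts (by Arzel\`a--Ascoli / elliptic estimates away from $0$) a nonzero limit $\phi_\infty \in X_1$ on $B_1$, $B_R$, or $\IR^N$ solving $L_{t_\infty}(\phi_\infty) = 0$ (with $t_\infty \in (0,\infty]$ possibly $\infty$, giving $L_\infty = -\Delta$), which contradicts Lemma \ref{kernel_t} (the triviality of the kernel of $L_t$ on $X_1$ for every $t \in (0,\infty]$). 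A little care is needed to rule out the singularity concentrating at the origin or escaping to $\partial B_1$, exactly as in the rescaling argument already carried out; the weights $\gamma_k^\pm + \sigma$ computations from the earlier Lemma control the limiting behaviour there.

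Next, with the uniform estimate in hand, I would run the \textbf{continuation argument}. Openness: if $L_{t_1}$ is surjective from $X_1$ onto $Y_1$ with the estimate, then for $t$ near $t_1$ the operator $L_t = L_{t_1} + (L_t - L_{t_1})$ is a small perturbation in the relevant operator norm (the difference only affects the first-order coefficient $\frac{p}{\beta r + t r^\alpha}$, and one checks this perturbation is small as a map $X_1 \to Y_1$, using the weight structure), hence still surjective with a comparable estimate, by a Neumann-series / invertibility perturbation argument. Closedness: along any sequence $t_m \to t_*$ in $\mathcal{T}$, given $f \in Y_1$ solve $L_{t_m}\phi_m = f$ with $\|\phi_m\|_X \le C\|f\|_Y$; the uniform a priori bound plus elliptic compactness away from the origin let us pass to a limit $\phi_* \in X_1$ with $L_{t_*}\phi_* = f$ and the same estimate, so $t_* \in \mathcal{T}$. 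Starting from $t = 0$, where surjectivity of $L_0$ on $X_1$ with a uniform constant is given by the quoted result from \cite{ACL2018}, connectedness of the interval forces $\mathcal{T}$ to contain all $t \ge t_0$.

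The main obstacle I expect is the a priori estimate step, specifically making the compactness/rescaling argument produce a genuine \emph{nonzero} limit in the right weighted space while simultaneously allowing $t_m \to \infty$: one must track which of the three scenarios (limit lives on $B_1$, on a half-line, or on all of $\IR^N$) occurs and ensure the corresponding limiting operator is $L_{t_\infty}$ with $t_\infty \in (0,\infty]$ so that Lemma \ref{kernel_t} applies; the borderline weight identities $\gamma_1^- + \sigma = -1$ and $\sigma + \gamma_1^+ > 0$ are what prevent the contradiction from collapsing. A secondary technical point is verifying that the perturbation $L_t - L_{t_1}$ is genuinely small $X_1 \to Y_1$ uniformly for $t, t_1$ large — here one uses that $\frac{p}{\beta r + t r^\alpha} \le \frac{C}{r}$ with constant independent of $t$, and that on the modes $k \ge 1$ the weighted norms absorb the extra $r^{-1}$, which is precisely why the $k=0$ mode had to be excluded and handled separately.
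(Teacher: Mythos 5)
Your proposal is correct and matches the paper's approach: the paper likewise proves the proposition by a method-of-continuity argument starting from the known $L_0$ isomorphism, coupled with a compactness/blow-up a priori bound whose contradiction case is handled by rescaling and invoking the kernel lemma for the limiting operators $L_{t_\infty}$ with $t_\infty \in [0,\infty]$, treating the two cases $s_m$ bounded away from $0$ and $s_m\to 0$ exactly as you outline. The one technical point you elide is that the paper runs the continuity step in a strengthened norm $\widehat{X}$ (which also controls $|x|^{\sigma+2}|\Delta\phi|$) so that $(t,\phi)\mapsto L_t(\phi)$ is genuinely continuous into $Y_1$, and then observes that the first-order bound alone suffices since the zeroth- and second-order pieces follow by integration and from the PDE respectively.
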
 
  
  \begin{proof}  Since we already have a well developed theory regarding $L_0$ we will use a continuation argument to connect this to $L_t$.  For the continuation argument we need to define a new norm, 
  \[ \| \phi \|_{\widehat{X}}:=\sup_{B_1} \left\{ |x|^\sigma | \phi(x)| + |x|^{\sigma+1} | \nabla \phi(x)| + |x|^{\sigma+2} | \Delta \phi(x)| \right\},\] and we define the spaces $\widehat{X}$ accordingly and we set $ \widehat{X}_1$ to be the functions in $\widehat{X}$ with no $k=0$ mode.   We begin by showing that for each $ 0<t<\infty$  that we have the desired mapping properties;  but possibly the constant $C$ depends on $t$.  Later we show we can take $C$ independent of $t$ for large $t$;  really this result holds for all $ t \ge 0$  but we will only need it independent of $t$ for large $t$.      So fix $ 0<\gamma<\infty$ and consider  $ (t,\phi) \mapsto L_t(\phi)$ is continuous from $ [0,\gamma] \times \widehat{X}_1 $ to $Y_1$.  Additionally from our previous work \cite{ACL2018} we know that $L_0: \widehat{X}_1 \rightarrow Y_1$ is an isomorphism.    To prove $L_\gamma$ has the desired mapping properties is it sufficient to obtain bounds on $L_t$ for $ 0 \le t \le \gamma$.   So we suppose there is $ 0 \le t_m \le \gamma$ and $f_m \in Y_1,  \phi_m \in \widehat{X}_1$  such that $L_{t_m}(\phi_m)=f_m$ in $B_1 \backslash \{0\}$ with $ \phi_m=0$ on $ \partial B_1$  and $ \|f_m \|_Y \rightarrow 0$, $ \| \phi_m \|_{\widehat{X}}=1$.    To get a contradiction we will show that $ \| \phi_m\|_{\widehat{X}} \rightarrow 0$.    It will be sufficient to show that $ \sup_{B_1} | x|^{\sigma+1} | \nabla \phi_m(x)| \rightarrow 0$.  To see this note we can integrate the first order estimate to obtain the zero order estimate.    Also directly from the pde we get the second order estimate if we have the first order one.    So we suppose not; then there is some $ \E_0>0$ and $ x_m \in B_1 \backslash \{0\}$ such that $ |x_m|^{\sigma+1} | \nabla \phi_m(x_m)| \ge \E_0$.   Set $s_m:=|x_m|$ and now consider two cases (in that follows we are passing to subsequences if necessary):   (i) $s_m$ bounded away from zero,  \; (ii) $s_m \rightarrow 0$ (in both cases we assume $t_m \rightarrow t$). \\ 
  
  \noindent 
  \emph{Case (i).}   By elliptic theory $ \phi_m$ is bounded in $C^{1,\delta}_{loc}( \overline{B_1} \backslash \{0\})$ and converges in this space to some $ \phi$.   Since $s_m$ is bounded away from zero we see that $ \phi \neq 0$.   Additionally we have $L_t(\phi)=0$ in $B_1 \backslash \{0\}$ with $ \phi=0$ on $ \partial B_1$.   Also note $ \phi \in \widehat{X}_1$ and hence by our earlier kernel results we know $ \phi=0$, a contradiction. \\
  
  \noindent 
  \emph{Case (ii).}   Define $ \zeta_m(z):=s_m^\sigma \phi_m(s_m z)$ for $ |z|<\frac{1}{s_m}$ and note we have the bounds $ |z|^{\sigma} | \zeta_m(z)| + |z|^{\sigma+1} | \nabla \zeta_m(z)| \le 1$.   Define $z_m= s_m^{-1} x_m$  and note $ |z_m|=1$ and $ | \nabla \zeta_m(z_m)| \ge \E_0$.  A computation shows that 
  \begin{equation} \label{eq_1011}
  L_{t_m s_m^{\alpha-1}}(\zeta_m)= g_m(z):=s_m^{\sigma+2} f_m(s_m z) \mbox{ in } B_\frac{1}{s_m}, \qquad \zeta_m=0 \mbox{ on }  \partial B_\frac{1}{s_m}.
  \end{equation}   By elliptic estimates applied to an increasing sequence of annuli, and a suitable diagonal argument,  there is some $ \zeta $ such that $\zeta_m \rightarrow \zeta$ in $C^{1,\delta}_{loc}(\IR^N \backslash \{0\})$ and  note there is some $|z_0|=1$ (the limit of the $z_m$) such that $| \nabla \zeta(z_0)| \ge \E_0$ and hence $ \zeta \neq 0$.       But we also note that $L_0(\zeta)=0$ in $ \IR^N \backslash \{0\}$ and $\zeta$ satisfies the needed bounds to be able to apply our earlier Liouville results, hence $ \zeta=0$; which gives the needed contradiction.    \\
  
  So we have  shown that for each $ t \ge 0$ there is some $C_t$ such that we have the desired linear theory if we replace $C$ with $C_t$.   Now we show the $C_t$ can be taken independently of $t$.     Note that the above proof really shows the result could only fail in the case of $ t \rightarrow \infty$.       
  
    So we suppose the result is false;  so there is some $t_m \rightarrow \infty$,  $ f_m \in Y_1$, $ \phi_m \in X_1$  such that $L_{t_m}(\phi_m)=f_m $ in $B_1 \backslash \{0\}$ with $ \phi_m=0$ on $ \partial B_1$ with $ \|f_m \|_Y \rightarrow 0$ and $ \| \phi_m \|_X=1$.   As before there is some $x_m \in B_1 \backslash \{0\}$ such that $ |x_m|^{\sigma+1} | \phi_m(x_m)| \ge \E_0$.  Set $s_m:=|x_m|$ and we consider the cases: \\ (i) $s_m$ bounded away from zero, \; (ii) $s_m \rightarrow 0$. \\ 
    
    \noindent 
    \emph{Case (i).}  From the equation and compactness arguments we see we see there is some $\phi$ such that $ \phi_m \rightarrow \phi $ in $C^{1,\delta}( \overline{B_1} \backslash \{0\})$.  Since $ s_m$ bounded away from zero we see that $ \phi \neq 0$ and also note that $ \phi \in X_1$.     Additionally we can pass to the limit in the equation to see $L_\infty(\phi)=0$ in $B_1 \backslash \{0\}$ with $ \phi=0$ on $ \partial B_1$;  but this contradicts the earlier kernel results. \\
    
    \noindent 
    \emph{Case (ii).}  We now follow exactly the case (ii) from the finite $t$;  set $\zeta_m(z)= s_m^\sigma \phi_m(s_m z)$ and then note  $ |z|^{\sigma} | \zeta_m(z)| + |z|^{\sigma+1} | \nabla \zeta_m(z)| \le 1 $.   Define $z_m= s_m^{-1} x_m$  and note $ |z_m|=1$ and $ | \nabla \zeta_m(z_m)| \ge \E_0$.  As before $ \zeta_m$ satisfies (\ref{eq_1011}).   Again we use a compactness argument away from the origin and $ \infty$ to pass to the limit $ \zeta$  in $C^{1,\delta}_{loc}(\IR^N \backslash \{0\})$ and hence $ |\nabla \zeta(z_0)| \ge \E_0$ for some $|z_0|=1$ and   $ |z|^{\sigma} | \zeta(z)| + |z|^{\sigma+1} | \nabla \zeta(z)| \le 1 $ in $ \IR^N \backslash \{0\}$.  Moreover $ \zeta$ satisfies $L_\gamma(\zeta)=0$ in $ \IR^N \backslash \{0\}$ where $\gamma= \lim_{m} t_m s_m^{\alpha-1} \in [0,\infty]$.  In all cases we can apply our earlier kernel results to obtain a contradiction. 

 \end{proof}
 
 \begin{lemma} \label{k=0 mode}($k=0$ mode for $L_t$)

 \end{lemma}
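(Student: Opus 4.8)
The plan is to handle the radial ODE \eqref{non-homo-ode} with $k=0$ directly, since the spherical-harmonic continuation argument of Proposition \ref{higher_modes_linear} breaks down exactly here: $\psi_t$ lies in the kernel, so $L_t$ is not an isomorphism on the full space $X$. Concretely, I would state the lemma as: \emph{there is $C>0$ and $t_0$ large so that for all $t\ge t_0$ and every radial $b_0\in Y$ (i.e. $\sup_{(0,1)} r^{\sigma+2}|b_0(r)|<\infty$) there exists a radial $a_0\in X$ with $a_0(1)=0$ solving} \eqref{non-homo-ode} \emph{with $k=0$, and $\|a_0\|_X\le C\|b_0\|_Y$.} The ODE for $k=0$ can be written in divergence form. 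Multiplying by the integrating factor, note that $-a_0'' - \frac{N-1}{r}a_0' + \frac{p a_0'}{\beta r + t r^\alpha}$ equals $-\frac{1}{\rho(r) r^{N-1}}\big(\rho(r) r^{N-1} a_0'(r)\big)'$ where $\rho(r)=\exp\!\big(\int_r^1 \frac{p\,ds}{\beta s + t s^\alpha}\big)$; since $\psi_t'(r) = -\frac{r^\alpha}{(p-1)(\beta r + t r^\alpha)^{p/(p-1)}}$ one checks $\rho(r)r^{N-1}$ is a constant multiple of $(\beta r + t r^\alpha)^{p/(p-1)}/r^\alpha$ times a power, so $\psi_t$ is the decaying homogeneous solution and the other independent homogeneous solution is the constant function $1$ (which vanishes in no useful sense but is bounded).

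**Solving by variation of parameters.** With the two homogeneous solutions $h_1\equiv 1$ and $h_2=\psi_t$ in hand, I would write the solution of \eqref{non-homo-ode} by the standard variation-of-parameters formula against the weight $\rho(r)r^{N-1}$, choosing the two constants of integration to enforce $a_0(1)=0$ and to select the branch that is well-behaved (i.e.\ in $X$, so $r^\sigma|a_0(r)|+r^{\sigma+1}|a_0'(r)|$ bounded) near $r=0$. Explicitly,
\[
a_0(r) = \psi_t(r)\int_r^1 \frac{W(s)\,b_0(s)\,ds}{\rho(s)s^{N-1}} \;-\; \int_0^r \psi_t(s)\,\frac{W(s)\,b_0(s)\,ds}{\rho(s)s^{N-1}}\cdot(\text{const}),
\]
up to getting the precise Wronskian bookkeeping right (here $W$ denotes the Wronskian-type factor), then verify $a_0(1)=0$ is automatic from the first term and that the integral near $0$ converges because $\psi_t(s)$ is bounded while $b_0(s)/(\rho(s)s^{N-1})$ is integrable given the $Y$-bound on $b_0$ and the explicit growth $\rho(s)s^{N-1}\sim s^{N-1}(\beta + ts^{\alpha-1})^{p/(p-1)}$. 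The key computation is to track the powers of $r$: one wants to show $r^\sigma$ times this expression, and $r^{\sigma+1}$ times its derivative, are bounded by $C\|b_0\|_Y$ uniformly in $t\ge t_0$. This is where the relation $\gamma_1^-+\sigma=-1$ from the earlier lemma (the $k=0$ Euler exponents are $0$ and $\frac{p}{\beta}-(N-2)$) and the restriction $\frac{N}{N-1}<p<2$ get used to guarantee the weighted integrals converge and the $t$-dependence stays under control.

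**Uniformity in $t$ and the main obstacle.** For the uniform-in-$t$ estimate I would argue as in Proposition \ref{higher_modes_linear}: if the bound failed there would be $t_m\to\infty$, $b_0^m$ with $\|b_0^m\|_Y\to 0$, and radial $a_0^m$ with $\|a_0^m\|_X=1$; rescale $\zeta_m(z)=s_m^\sigma a_0^m(s_m z)$ around the point $x_m$ achieving the norm, pass to a limit, and get a nonzero bounded radial solution of $L_0(\zeta)=0$ or $L_\infty(\zeta)=0$ on $\IR^N\setminus\{0\}$ in the $k=0$ class — but here one must be careful, because unlike the $X_1$ case the $k=0$ Liouville statement for $L_0$ is genuinely false ($1$ and $\psi$ are bounded homogeneous solutions), so the contradiction cannot come from a blanket kernel-triviality result. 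I expect \textbf{this to be the main obstacle}: one must either (a) exploit that the explicit variation-of-parameters solution already gives the uniform bound by direct estimation of the kernels, bypassing the compactness/blow-up argument entirely, or (b) in the blow-up argument, additionally normalize $a_0^m$ modulo the one-dimensional kernel spanned by $\psi_{t_m}$ (e.g.\ impose $a_0^m{}'(1)$ or some moment to vanish) so that the limiting homogeneous solution is forced to be the trivial one. Option (a) is cleanest and is the route I would pursue: the formula for $a_0$ is explicit, the homogeneous solutions $1$ and $\psi_t$ are explicit, and $\psi_t\to 0$ away from the origin, so all the constants can be estimated by hand with the $Y$-norm of $b_0$, yielding the claimed $\|a_0\|_X\le C\|b_0\|_Y$ with $C$ independent of $t\ge t_0$.
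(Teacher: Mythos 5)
Your overall strategy --- attack the $k=0$ ODE \eqref{non-homo-ode} by a direct integrating-factor/variation-of-parameters computation and estimate the resulting formula by hand, precisely because the blow-up argument of Proposition~\ref{higher_modes_linear} cannot be salvaged here (the constant function and $\psi_t$ are bounded homogeneous solutions, so there is no $k=0$ Liouville theorem) --- is exactly the route the paper takes. The paper works with the first-order ODE for $a_0'$ and the integrating factor
$\mu_t(r)=r^{\sigma-\alpha+2}\left(\frac{\beta+tr^{\alpha-1}}{\beta+t}\right)^{p/(p-1)}$
rather than the full second-order variation-of-parameters formula you sketch, but that is a cosmetic difference.

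The genuine gap is in the step you describe as ``select the branch that is well-behaved near $r=0$.'' After imposing $a_0(1)=0$, the remaining solutions differ by multiples of $\psi_t$, and \emph{every} one of them lies in $X$ for each fixed $t$, since $\psi_t\in X$. So ``lying in $X$'' does not single out a branch, and the real issue --- uniformity of the bound as $t\to\infty$ --- is left untouched by your criterion. Indeed a $t$-independent normalization such as $a_0'(1)=0$ fails: the kernel contribution to $a_0'$ is a multiple of $1/\mu_t(r)$, and one computes $\sup_r r^{\sigma+1}/\mu_t(r)\sim t^{1/(p-1)}$, attained near the turning radius $R_t$ defined by $tR_t^{\alpha-1}=1$, so the resulting $X$-bound diverges. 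The essential content of the lemma is the $t$-dependent choice
$a_0'(1)=\int_{R_t}^1\mu_t(\tau)b(\tau)\,d\tau$,
which gives $a_0'(r)=\mu_t(r)^{-1}\int_{R_t}^{r}\mu_t(\tau)b(\tau)\,d\tau$, placing the free base point of integration exactly where $\mu_t$ switches between its small-$r$ and large-$r$ regimes; the bound then follows by splitting at $r=R_t$ and using elementary inequalities on each side. Your proposal identifies the correct framework and correctly rejects the compactness route, but it would stall at this point without the $R_t$ choice, which is the actual idea being proved.
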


 \begin{proof}  Consider (\ref{non-homo-ode}) in the case of $k=0$ and to indicate the dependence on $t$ we will write $a_t(r)$. Assume $ \sup_{0<r<1} |b(r)| r^{\sigma+2} \le 1$.     A computation shows an integrating factor associated with the ode is given by 
 \[ \mu_t(r)=r^{N-1} e^{\int_r^1 \frac{1}{\beta s + t s^\alpha} d s}=      r^{N-1 - \frac{p(\alpha-1)}{p-1}} \left(  \frac{\beta+t r^{\alpha-1}}{\beta+t} \right)^\frac{p}{p-1}.\]  
 We then obtain
 \[ \mu_t(r)a_t'(r) = a'_t(1)- \int_r^1 \mu_t(\tau) b(\tau) d\tau,~~~0<r\le1. \] 
We set
\[a'_t(1)=\int_{R_t}^1\mu_t(\tau) b(\tau) d\tau,\] 
where $R_t^{\alpha-1}t =1$. Then we get
\[a_t'(r)= \frac{1}{\mu_t(r)}\int_{R_t}^{r}\mu_t(\tau)b(\tau)d\tau,~~~0<r\le1. \] 
 and so we can write $a_t$ as 
 \[ a_t(r):= \int_r^1 \left( \frac{1}{\mu_t(s)} \int_{R_t}^s \mu_t(\tau) b(\tau) d \tau \right) ds,~~~0<r\le1. \]  and note $a_t(1)=0$.  The only thing left to check is that $a_t$ satisfies the desired bounds independent of $t$ for large $t$;  note this careful choice of $R_t$ is what gives the estimate.  Also note we only need to satisfy the first order estimate since we can integrate this to obtain the zero order estimate.   So writing out $a_t'(r)$ we see, using the equality $N-1 - \frac{p(\alpha-1)}{p-1}=\sigma-\alpha+2$, that 
 \[r^{\sigma+1}|a_t'(r)|\le r^{\alpha-1}\Big| \int_{R_t}^{r}\Big(\frac{\beta+t\tau^{\alpha-1}}{\beta+tr^{\alpha-1}}\Big)^{\frac{p}{p-1}}\frac{d\tau}{\tau^\alpha}\Big|\] and we now consider the two cases: (i) $ 0<r<R_t$, \;(ii) $ R_t <r<1$.  \\
 
 \noindent
 \emph{ Case (i).}  For $r<R_t$ we have
 
\begin{eqnarray*}
r^{\sigma+1}|a_t'(r)| & \le & r^{\alpha-1} \int_{r}^{R_t}\Big(\frac{\beta+t\tau^{\alpha-1}}{\beta+tr^{\alpha-1}}\Big)^{\frac{p}{p-1}}\frac{d\tau}{\tau^\alpha} \\
&\le &r^{\alpha-1}\Big(\frac{\beta+tR_t^{\alpha-1}}{\beta+tr^{\alpha-1}}\Big)^{\frac{p}{p-1}}\int_{r}^{R_t} \frac{d\tau}{\tau^\alpha} \\
&=& \Big(\frac{\beta+1}{\beta+tr^{\alpha-1}}\Big)^{\frac{p}{p-1}}\left( \frac{1-(\frac{R_t}{r})^{1-\alpha}}{\alpha-1}\right) \\
&\le &\Big( \frac{\beta+1}{\beta}\Big)^{\frac{p}{p-1}}\frac{1}{\alpha-1}.
\end{eqnarray*}
Thus we proved
\begin{equation}
r^{\sigma+1}|a_t'(r)|\le \Big( \frac{\beta+1}{\beta}\Big)^{\frac{p}{p-1}}\frac{1}{\alpha-1},~~for~~~0<r<R_t.
\end{equation}
 
 \noindent
 \emph{ Case (ii).}  For $r> R_t$ we write, using the inequality $(a+b)^q\le c_q (a^q+b^q)$ for $q>1$,
\begin{eqnarray*}
r^{\sigma+1} |a_t'(r)| &\le& \frac{r^{\alpha-1}}{(\beta+tr^{\alpha-1})^{\frac{p}{p-1}}} \int_{R_t}^r\frac{(\beta+t\tau^{\alpha-1})^{\frac{p}{p-1}}}{\tau^\alpha}d\tau \\
& \le & \frac{Cr^{\alpha-1}}{(\beta+tr^{\alpha-1})^{\frac{p}{p-1}}} \int_{R_t}^r \Big(\frac{1}{\tau^\alpha}+\frac{t^{\frac{p}{p-1}}}{\tau^{\alpha-\frac{p(\alpha-1)}{p-1}}}\Big) d\tau \\
&\le& \frac{C_1r^{\alpha-1}}{(\beta+tr^{\alpha-1})^{\frac{p}{p-1}}}
\Big(R_t^{1-\alpha}+t^{\frac{p}{p-1}}r^{\frac{\alpha-1}{p-1}}\Big),
\end{eqnarray*} 
where $C_1$ is a constant independent of $t$. Recall we have $tR_t^{\alpha-1}=1$, thus 
\[ r^{\sigma+1} |a_t'(r)|\le  \frac{C_1 tr^{\alpha-1}}{(\beta+tr^{\alpha-1})^{\frac{p}{p-1}}}+\frac{C_1 (tr^{\alpha-1})^{\frac{p}{p-1}}}{(\beta+tr^{\alpha-1})^{\frac{p}{p-1}}}  \le \frac{C_1}{(tr^{\alpha-1})^{\frac{1}{p-1}}}+C_1,\]
and since for $r\ge R_t$ we have $tr^{\alpha-1}\ge tR_t^{\alpha-1}=1$
we get
\begin{equation}
 r^{\sigma+1} |a_t'(r)|\le C_1+C_1=2C_1,~~\text{for}~~r\ge R_t.
\end{equation}
Combining case (i) and (ii) gives 
\[ r^{\sigma+1} |a_t'(r)|\le \max\Big\{\Big( \frac{\beta+1}{\beta}\Big)^{\frac{p}{p-1}}\frac{1}{\alpha-1}, 2C_1\Big\}.\]
\end{proof}

 
 \noindent 
 \textbf{Completion of the proof of Proposition \ref{main_linear}.}   Here we combine  Lemma \ref{k=0 mode} and Proposition \ref{higher_modes_linear} to complete the proof of Proposition \ref{main_linear}.   Let $ f \in Y$ and let $ \phi \in X$ satisfy (\ref{lin_f}) and we write $ f(x)=f_0(r)+ f_1(x)$, $ \phi(x)= \phi_0(r)+ \phi_1(x)$ where we have split off the $k=0$ mode and $ \phi_1 \in X_1, f_1 \in Y_1$.  Then we have 
 \begin{eqnarray*}
 \| \phi \|_X & \le & \| \phi_0 \|_X + \| \phi_1\|_X  \\ 
  & \le &  \| C \|f_0 \|_Y + C \|f_1 \|_Y 
  \end{eqnarray*} and hence if we can show there is some $D>0$ (independent of $f$) such that $ \| f_0 \|_Y + \|f_1\|_Y \le D \|f_0 + f_1 \|_Y$  then we would be done.   We suppose the result is false and hence for all $ m \ge 1$ there is some $f^m \in Y$ such that 
  \[1= \|f_0^m \|_Y + \|f_1^m\|_Y>m \|f^m\|_Y\] where we have also performed a normalization of $f^m$ and hence $ f^m \rightarrow 0$ in $Y$. Then note we have 
 $|S^{N-1}| f_0(r) = \int_{| \theta|=1} f^m(r \theta) d \theta$ and hence 
 \[ r^{\sigma+2} |f_0^m(r)| \le C \int_{|\theta|=1} r^{\sigma+2} |f^m(r \theta)| d \theta \le C_1 \|f^m \|_Y,\] and hence $ \|f_0^m\|_Y \rightarrow 0$.    Also note we have $ f_1^m(x)= f^m(x)- f_0^m(r)$ and hence $ \|f_1^m\|_Y \rightarrow 0$; a contradiction. 
 \hfill
 $\Box$

  \section{The fixed point argument for (\ref{eq_intro})} \label{section_fixed_ball}

\begin{lemma}  Suppose $ 1<p \le 2$.  Then there is some $ C=C_p$ such that  for all $x,y,z \in \IR^N$ one has
\begin{equation} \label{ineq_1}
0 \le |x+y|^p- |x|^p - p |x|^{p-2} x \cdot y \le C |y|^p,
\end{equation} 
\begin{equation} \label{ineq_2}
\Big| |x+y|^p-p |x|^{p-2} x \cdot y - |x+z|^p + p|x|^{p-2} x \cdot z \Big| \le C \left( |y|^{p-1} + |z|^{p-1} \right) |y-z|.
\end{equation} 

\begin{equation} \label{ineq_3}
\Big  |x+y|^p-|x+z|^p \Big| \le C \left( |y|^{p-1} + |z|^{p-1} + |x|^{p-1} \right) |y-z|.
\end{equation} 

\end{lemma}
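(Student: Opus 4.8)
The plan is to prove the three inequalities by reducing them to one-dimensional calculus estimates for the function $h(s) = |s|^p$ (or rather for the map $x \mapsto |x|^p$ on $\IR^N$), exploiting that $1<p\le 2$ so that the gradient $x\mapsto p|x|^{p-2}x$ is Hölder continuous of order $p-1$. The only genuinely classical fact I would invoke is the elementary vector inequality
\[
\bigl| |x|^{p-2}x - |z|^{p-2}z \bigr| \le C_p |x-z|^{p-1}, \qquad 1<p\le 2,
\]
valid for all $x,z\in\IR^N$ (this is standard; it follows from the concavity-type behaviour of $t\mapsto t^{p-1}$ and can be proved by splitting into the cases $|x-z|\le \tfrac12\max(|x|,|z|)$ and $|x-z|> \tfrac12\max(|x|,|z|)$, or simply cited). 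Everything else will be built from the fundamental theorem of calculus along segments.

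\emph{Inequality \eqref{ineq_1}.} For the left inequality, convexity of $x\mapsto |x|^p$ gives $|x+y|^p \ge |x|^p + p|x|^{p-2}x\cdot y$ directly, since $p|x|^{p-2}x$ is the gradient at $x$. For the right inequality I would write
\[
|x+y|^p - |x|^p - p|x|^{p-2}x\cdot y = \int_0^1 \bigl( p|x+sy|^{p-2}(x+sy) - p|x|^{p-2}x \bigr)\cdot y\, ds,
\]
and bound the integrand using the cited Hölder estimate by $C_p |sy|^{p-1}|y| \le C_p |y|^p$; integrating in $s$ preserves the form. One should note the mild subtlety that the integrand may be singular at points where $x+sy=0$ when $p<2$, but $|x+sy|^{p-1}$ is still integrable, so the computation is legitimate (alternatively one regularizes and passes to the limit).

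\emph{Inequalities \eqref{ineq_2} and \eqref{ineq_3}.} For \eqref{ineq_3} I would again use the fundamental theorem of calculus, now in the variable interpolating between $y$ and $z$:
\[
|x+y|^p - |x+z|^p = \int_0^1 p\, |x + z + s(y-z)|^{p-2}\,(x+z+s(y-z))\cdot (y-z)\, ds,
\]
so $\bigl| |x+y|^p-|x+z|^p\bigr| \le C_p \bigl(\int_0^1 |x+z+s(y-z)|^{p-1}ds\bigr)|y-z|$, and then bound $|x+z+s(y-z)|^{p-1}\le C_p(|x|^{p-1}+|y|^{p-1}+|z|^{p-1})$ using $|a+b|^{p-1}\le |a|^{p-1}+|b|^{p-1}$ for $0<p-1\le 1$. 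For \eqref{ineq_2}, define $G(y):=|x+y|^p - p|x|^{p-2}x\cdot y$; then $\nabla G(y) = p|x+y|^{p-2}(x+y) - p|x|^{p-2}x$, and
\[
G(y) - G(z) = \int_0^1 \nabla G\bigl(z+s(y-z)\bigr)\cdot(y-z)\,ds,
\]
so it suffices to bound $|\nabla G(w)| = p\bigl| |x+w|^{p-2}(x+w) - |x|^{p-2}x\bigr| \le C_p |w|^{p-1}$ for $w = z+s(y-z)$, and then $|w|^{p-1}\le C_p(|y|^{p-1}+|z|^{p-1})$, again by subadditivity of $t\mapsto t^{p-1}$. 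Integrating in $s$ gives exactly \eqref{ineq_2}.

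\emph{Main obstacle.} There is no deep obstacle; the work is entirely in the elementary Hölder-continuity estimate for $x\mapsto |x|^{p-2}x$ and in handling the (integrable) singularities that appear along segments passing through the origin when $p<2$. I would either prove the Hölder estimate in a short preliminary paragraph with the two-case splitting described above, or simply cite it as standard; the rest is bookkeeping with the fundamental theorem of calculus and the inequality $(a+b)^q\le a^q+b^q$ for $0<q\le 1$.
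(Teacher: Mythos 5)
The paper states this lemma without proof (it is treated as standard), so there is nothing in the paper to compare against. Your argument is a correct and complete elementary proof: the lower bound in \eqref{ineq_1} is convexity, and the fundamental-theorem-of-calculus identities reduce all three upper bounds to the Hölder estimate $\bigl||a|^{p-2}a-|b|^{p-2}b\bigr|\le C_p|a-b|^{p-1}$ for $1<p\le 2$ together with subadditivity of $t\mapsto t^q$ for $0<q\le 1$. One minor remark: the singularity worry along segments through the origin is moot, since $|w|^{p-2}w$ has magnitude $|w|^{p-1}$ and extends continuously (indeed H\"older continuously) to $w=0$ when $p>1$; hence $s\mapsto |x+sy|^p$ is $C^1$ and the fundamental theorem of calculus applies directly with no regularization needed.
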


We will need some asymptoptics of $u_t$.  So first note that 
\[ u_t'(r) =  \frac{-1}{ \left( \beta r+ t r^\alpha \right)^\frac{1}{p-1}}, \qquad \mbox{ and if we set } C_\beta:= \frac{1}{\beta^\frac{1}{p-1}},\] and hence 
\begin{equation} \label{asymp_1} 
| u_t'(r)| \le \min \left\{  \frac{C_\beta}{ r^\frac{1}{p-1}},    \frac{1}{ t^\frac{1}{p-1} r^{N-1}} \right\}, \qquad \mbox{ so } 
\end{equation} 
\begin{equation} \label{asymp_2} 
r^{\sigma+1}| u_t'(r)| \le \min \left\{  C_\beta,    \frac{1}{ t^\frac{1}{p-1} r^{N-2-\sigma} } \right\}.
\end{equation}   So we see for any $ t>0$ we have $ \lim_{r \rightarrow 0} r^{\sigma+1} u_t'(r)= -C_\beta$ and $u_t,u_t' $ converge uniformly to zero away from the origin.    In what follows $ B_R$ is the closed ball in $X$ centred at the origin with radius $R$.

\begin{lemma}(Into)  \label{into_lemma} 
\begin{enumerate} 
\item There is some $C>0$ such that for all  $0<R<1$, $0<\delta<1$, $ t>1$, $ \phi \in B_R \subset X$   one has 
\[ \| g| \nabla u_t + \nabla \phi|^p \|_Y \le C  \left( R^p + \sup_{|z|<\delta } |g(z)| + \frac{1}{t^\frac{p}{p-1} \delta^{(N-1)p-\sigma-2}} \right).\]  

\item There is some $C>0$ such that for all $ t>1$, $0<R<1$ and $ \phi \in B_R$ one has 
\[  \Big\|  | \nabla u_t+ \nabla \phi|^p - p | \nabla u_t|^{p-2} \nabla u_t\cdot \nabla \phi -| \nabla u_t|^p \Big\|_Y \le C R^p.\] 
\end{enumerate} 
\end{lemma}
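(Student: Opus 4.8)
The plan is to prove the two parts by pointwise estimation of the relevant quantities at a point $x \in B_1$, multiplying by $|x|^{\sigma+2}$ and taking the supremum, using the elementary inequalities (\ref{ineq_1})–(\ref{ineq_3}) from the preceding lemma together with the asymptotics (\ref{asymp_1})–(\ref{asymp_2}) for $u_t$ and the defining bound $|x|^{\sigma}|\phi(x)| + |x|^{\sigma+1}|\nabla\phi(x)| \le R$ coming from $\phi \in B_R \subset X$.

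For part 2, the plan is to apply (\ref{ineq_1}) with $x = \nabla u_t(x)$ and $y = \nabla\phi(x)$, which gives immediately
\[
0 \le \bigl| \nabla u_t + \nabla\phi \bigr|^p - p|\nabla u_t|^{p-2}\nabla u_t \cdot \nabla\phi - |\nabla u_t|^p \le C |\nabla\phi(x)|^p.
\]
Then multiply by $|x|^{\sigma+2}$: since $|x|^{\sigma+1}|\nabla\phi(x)| \le R$, we get $|x|^{\sigma+2}|\nabla\phi(x)|^p = |x| \cdot (|x|^{\sigma+1}|\nabla\phi(x)|)^p \cdot |x|^{-(\sigma+1)p + \sigma + 2}$... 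I will instead note $|x|^{(\sigma+1)p}|\nabla\phi(x)|^p \le R^p$ and that the remaining power of $|x|$, namely $\sigma+2-(\sigma+1)p$, is nonnegative precisely because $p < 2$ and the parameter relations give $\sigma = \frac{2-p}{p-1}$, so $\sigma+2-(\sigma+1)p = \sigma(1-p) + 2 - p = -(2-p) + 2 - p = 0$; hence this power is exactly $0$ and $|x|^{\sigma+2}|\nabla\phi(x)|^p \le R^p$ directly. Taking the supremum over $B_1$ yields the claim with a constant depending only on $p$.

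For part 1, the plan is to write $|\nabla u_t + \nabla\phi|^p \le C(|\nabla u_t|^p + |\nabla\phi|^p)$ by convexity, so $|g|\,|\nabla u_t + \nabla\phi|^p \le C|g(x)|(|\nabla u_t(x)|^p + |\nabla\phi(x)|^p)$. The $|\nabla\phi|^p$ term is handled exactly as in part 2 (using $|g| \le \sup g \le$ a bounded quantity, or better, bounding $|g|$ by its sup over $B_1$) and contributes the $R^p$ term. For the $|g(x)|\,|\nabla u_t(x)|^p$ term I split $B_1$ into $\{|x| < \delta\}$ and $\{\delta \le |x| < 1\}$. On $\{|x|<\delta\}$ bound $|g(x)| \le \sup_{|z|<\delta}|g(z)|$ and use $|x|^{\sigma+2}|\nabla u_t(x)|^p = (|x|^{\sigma+1}|\nabla u_t(x)|)^p \, |x|^{\sigma+2-(\sigma+1)p} \le C_\beta^p$ by (\ref{asymp_2}) since the exponent vanishes; this gives the $\sup_{|z|<\delta}|g|$ term. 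On $\{\delta \le |x| < 1\}$ bound $|g(x)|$ by a fixed constant (its sup over $B_1$, which is finite and can be absorbed, or rather one keeps track that $g$ is continuous hence bounded on $\overline{B_1}$ — the statement's constant $C$ may be taken to also absorb $\|g\|_{L^\infty}$, though re-reading the statement I will instead just produce the three displayed terms and note that the second is the dominant contribution from the region near the origin, the first from $\phi$, and use the second bound in (\ref{asymp_2}), $|x|^{\sigma+1}|\nabla u_t(x)| \le \frac{1}{t^{1/(p-1)}|x|^{N-2-\sigma}}$, to get $|x|^{\sigma+2}|\nabla u_t(x)|^p \le \frac{C}{t^{p/(p-1)} |x|^{(N-2-\sigma)p - 2 + \sigma}} \le \frac{C}{t^{p/(p-1)}\delta^{(N-1)p - \sigma - 2}}$ on $\delta \le |x| < 1$, after checking $(N-2-\sigma)p - 2 + \sigma = (N-1)p - \sigma - 2 - (p - \sigma - 2 + \sigma) $... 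I will carry out this exponent bookkeeping carefully: $(N-2-\sigma)p + \sigma - 2 = (N-1)p - p - \sigma p + \sigma - 2 = (N-1)p - \sigma - 2 - (p - \sigma p - \sigma) $; since $p + \sigma p = p(1+\sigma)$ and $\sigma+2 = p(1+\sigma)$ from the parameter relations, $p + \sigma p = \sigma + 2$, so $p - \sigma p - \sigma = \sigma + 2 - 2\sigma p$... this is getting delicate, so the careful verification of this exponent identity, ensuring it equals $(N-1)p - \sigma - 2$ and that this is positive — which needs $p > \frac{N}{N-1}$ — is where I expect the bookkeeping to require the most care).

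The main obstacle is therefore not conceptual but the exponent accounting: one must verify, using $\sigma = \frac{2-p}{p-1}$ (equivalently $(\sigma+1)(p-1) = 1$, i.e. $(\sigma+1)p = \sigma+2$), that all the "extra" powers of $|x|$ that appear are nonnegative on $B_1$ so that restricting to $|x|<1$ or to $|x| \ge \delta$ produces exactly the stated bounds, and in particular that $(N-1)p - \sigma - 2 > 0$, which is where the hypothesis $\frac{N}{N-1} < p$ enters. Once these identities are in hand, the rest is a routine combination of (\ref{ineq_1}), convexity of $s \mapsto |s|^p$, and (\ref{asymp_2}), and taking suprema.
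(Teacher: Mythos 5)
Your plan is correct and follows essentially the same route as the paper's proof: part 2 is an immediate application of (\ref{ineq_1}) with $x=\nabla u_t$, $y=\nabla\phi$ combined with the exponent identity $(\sigma+1)p=\sigma+2$, and part 1 splits $B_1$ into $B_\delta$ and $\{\delta\le|x|<1\}$, using convexity of $s\mapsto |s|^p$, the first bound in (\ref{asymp_2}) near the origin, and the second bound in (\ref{asymp_2}) on the outer annulus.

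One small slip in your exponent bookkeeping: when you apply the second bound in (\ref{asymp_2}) to $|x|^{\sigma+2}|\nabla u_t|^p$, the intermediate exponent $(N-2-\sigma)p-2+\sigma$ is off. The clean computation is to factor $|x|^{\sigma+2}|\nabla u_t|^p = |x|^{\sigma+2-(\sigma+1)p}\bigl(|x|^{\sigma+1}|\nabla u_t|\bigr)^p$, observe that the prefactor is $|x|^0=1$ by $(\sigma+1)p=\sigma+2$, and then raise (\ref{asymp_2}) to the $p$-th power to get $\le t^{-p/(p-1)}|x|^{-(N-2-\sigma)p}$; the exponent is thus $(N-2-\sigma)p$ with no extra $-2+\sigma$. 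The identity you were chasing then falls out directly:
\[
(N-2-\sigma)p = (N-1)p-(\sigma+1)p = (N-1)p-(\sigma+2),
\]
using $(\sigma+1)p=\sigma+2$ once more. You are right that its positivity is exactly the assumption $p>\frac{N}{N-1}$, since $(N-1)p-\sigma-2>0 \Leftrightarrow (N-1)(p-1)>1 \Leftrightarrow \alpha>1$.
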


\begin{proof}  \begin{enumerate} \item  Fix $ R,\delta,\phi$ as in the hypothesis and $ C$ will denote a changing constant that is independent of these parameters.   Set $ I_0:=|g(x)| |x|^{\sigma+2} | \nabla u_t + \nabla \phi|^p \le C |g(x)| |x|^{\sigma+2} \left\{ | \nabla u_t|^p + |\nabla \phi|^p \right\}$.  ALso note we have the estimates $ |x|^{\sigma+2} | \nabla \phi(x)|^p \le R^p$ and $r^{\sigma+2} | \nabla u_t(r)| \le C$.  The first step is to write $\sup_{B_1} |I_0|$ as a sup over $B_\delta$ and $ \delta <|x|<1$.  Doing this gives $\sup_{B_\delta} I_0 \le C \sup_{B_\delta}|g|$.   For the other portion we obtain 
\[ \sup_{\delta <|x|<1} I_0 \le C R^p + C \sup_{\delta <|x|<1}  \frac{ 1}{ t^\frac{p}{p-1} |x|^{ (N-1)p-\sigma-2}} \le C R^p +\frac{C}{ t^\frac{p}{p-1} \delta^{(N-1)p-\sigma-2}},\] after noting $ (N-1)p-\sigma-2>0$.  

\item  This estimate comes from applying (\ref{ineq_1}) with $ x= \nabla u_t$ and $ y=\nabla \phi$.    One should note carefully that $ \nabla \phi$ is not small compared to $ \nabla u_t$ (at least away from the origin).    We note generally when applying these fixed point arguments one can take the $ \phi$ term small compared to the $u_t$ term. 
\end{enumerate} 
\end{proof}

\begin{lemma} \label{contraction_lemma} (Contraction) 
\begin{enumerate}
    \item  There is some $C>0$ such that for $ R \in (0,1), t>1, \phi_i \in B_R$ one has 
    \[ \|I \|_Y \le C R^{p-1} \| \phi_2 - \phi_1 \|_X\] where 
    \[ I:= | \nabla u_t+ \nabla \phi_2|^p - p | \nabla u_t|^{p-2} \nabla u_t \cdot \nabla \phi_2- | \nabla u_t+ \nabla \phi_1|^p+ p | \nabla u_t|^{p-2} \nabla u_t \cdot \nabla \phi_1.\] 
    
    \item There is some $C>0$ such that for $ \tau \in (0,1),R>1, \phi_i \in B_\tau$ one has
    \[ \|J \|_Y \le C \left\{ \sup_{|x| \le \delta} |g(x)| + R^{p-1} + \frac{1}{ t \delta^{\alpha-1} }  \right\} \| \phi_2 - \phi_1 \|_X\] where 
    \[ J:= g(x)\left\{ | \nabla u_t + \nabla \phi_2|^p - | \nabla u_t + \nabla \phi_1 |^p \right\}.\] 
\end{enumerate}

\end{lemma}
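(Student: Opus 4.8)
\emph{Strategy.} Both estimates will be deduced from the pointwise inequalities (\ref{ineq_2}) and (\ref{ineq_3}), applied with $x=\nabla u_t(x)$, $y=\nabla\phi_2(x)$, $z=\nabla\phi_1(x)$, together with the definitions of the $X$ and $Y$ norms and the asymptotics (\ref{asymp_1})--(\ref{asymp_2}) of $u_t$. The recurring mechanism is that the weight $\sigma=\frac{2-p}{p-1}$ satisfies $\sigma+1=\frac{1}{p-1}$ and $\sigma+2=\frac{p}{p-1}$, so that $(\sigma+2)-(p-1)(\sigma+1)-(\sigma+1)=0$; hence the negative powers of $|x|$ produced by a factor $|\nabla\phi_i(x)|^{p-1}$ (or by $|\nabla u_t(x)|^{p-1}$ in its slow--decay regime) multiplied by a factor $|\nabla\phi_2(x)-\nabla\phi_1(x)|$ cancel exactly against the weight $|x|^{\sigma+2}$ in the $Y$ norm, and one is left with a uniformly bounded quantity.

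For part 1, inequality (\ref{ineq_2}) gives the pointwise bound
\[
|I(x)|\le C\bigl(|\nabla\phi_2(x)|^{p-1}+|\nabla\phi_1(x)|^{p-1}\bigr)\,|\nabla\phi_2(x)-\nabla\phi_1(x)|.
\]
Using $|x|^{\sigma+1}|\nabla\phi_i(x)|\le\|\phi_i\|_X\le R$ and $|x|^{\sigma+1}|\nabla\phi_2(x)-\nabla\phi_1(x)|\le\|\phi_2-\phi_1\|_X$, I would multiply by $|x|^{\sigma+2}$; the remaining power is $|x|^{(\sigma+2)-(p-1)(\sigma+1)-(\sigma+1)}=|x|^0$, so $\|I\|_Y\le CR^{p-1}\|\phi_2-\phi_1\|_X$ with $C$ depending only on $p$ and $N$.

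For part 2, inequality (\ref{ineq_3}) gives
\[
|J(x)|\le C|g(x)|\bigl(|\nabla\phi_2(x)|^{p-1}+|\nabla\phi_1(x)|^{p-1}+|\nabla u_t(x)|^{p-1}\bigr)\,|\nabla\phi_2(x)-\nabla\phi_1(x)|.
\]
The two $|\nabla\phi_i|^{p-1}$ terms are handled exactly as in part 1 and, bounding $|g|$ by $\|g\|_{L^\infty(B_1)}$, they contribute at most $CR^{p-1}\|\phi_2-\phi_1\|_X$. For the $|\nabla u_t|^{p-1}$ term I would split $B_1$ into $B_\delta$ and $\{\delta<|x|<1\}$, as in Lemma \ref{into_lemma}. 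On $B_\delta$ use the slow--decay estimate $|\nabla u_t(r)|\le C_\beta r^{-1/(p-1)}=C_\beta r^{-(\sigma+1)}$ from (\ref{asymp_1}): the powers of $|x|$ cancel again and, since $|g|\le\sup_{|x|\le\delta}|g(x)|$ there, this gives $C\sup_{|x|\le\delta}|g(x)|\,\|\phi_2-\phi_1\|_X$. On $\{\delta<|x|<1\}$ use the fast--decay estimate $|\nabla u_t(r)|\le t^{-1/(p-1)}r^{-(N-1)}$, so that $|\nabla u_t(x)|^{p-1}\le t^{-1}|x|^{-\alpha}$ because $\alpha=(p-1)(N-1)$; multiplying by $|x|^{\sigma+2}$ and by $|\nabla\phi_2-\nabla\phi_1|\le|x|^{-(\sigma+1)}\|\phi_2-\phi_1\|_X$ leaves the factor $|x|^{1-\alpha}\le\delta^{1-\alpha}$ (here $\alpha>1$ is used), hence this piece is $\frac{C}{t\delta^{\alpha-1}}\|\phi_2-\phi_1\|_X$. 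Summing the three contributions gives the claim, with all constants independent of $R$, $\delta$, $t$ and the $\phi_i$ (depending only on $p$, $N$ and $\|g\|_{L^\infty(B_1)}$).

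The only mildly delicate point is the domain splitting in part 2 and the choice of which decay estimate for $\nabla u_t$ to use on each piece: on the annulus one must use the fast--decay bound — valid uniformly for $t$ large — to extract the $t^{-1}$ gain, while near the origin the slow--decay bound is the right one and it costs only $\sup_{B_\delta}|g|$, which is small precisely because $g(0)=0$. Everything else is routine manipulation of the weighted norms, exploiting the exact cancellation of the $|x|$--powers dictated by the value of $\sigma$.
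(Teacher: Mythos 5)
Your proposal is correct and follows exactly the same route as the paper: part 1 from (\ref{ineq_2}) with the identity $(\sigma+2)-(p-1)(\sigma+1)-(\sigma+1)=0$, and part 2 from (\ref{ineq_3}) together with the $B_\delta$ / $\{\delta<|x|<1\}$ splitting borrowed from Lemma \ref{into_lemma}, using the slow-decay bound for $\nabla u_t$ near the origin and the fast-decay bound on the annulus. The paper's proof is just a two-line pointer to these ingredients; you have merely spelled out the details.
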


\begin{proof}  \begin{enumerate} \item    By using (\ref{ineq_2}) with $ x= \nabla u_t, \;  y= \nabla \phi_2, \;  z= \nabla \phi_1$ one can obtain the desired result.  \\

\item  Here we use (\ref{ineq_3}) with $ x= \nabla u_t, \; y= \nabla \phi_2, \; z= \nabla \phi_1$.   Moreover we follow the idea of the proof of Lemma \ref{into_lemma} part 1;  where we consider $ \sup_{|x|<\delta }$ and $ \sup_{\delta <|x|<1}$.
\end{enumerate}

\end{proof}

\noindent
\textbf{Proof of Theorem  \ref{main_non_lin} part 1.}  We now complete the proof of our main theorem.  Recall  we want to find some $\phi$ which satisfies (\ref{eq_phi_intro}) and then $u(x)= u_t(x)+ \phi(x)$ satisfies (\ref{eq_intro}).  We will show that the mapping $T_t$ is a contraction on $B_R$ for suitable $ 0<R<1$ and large $t$.   \\

\noindent
\emph{Into.}  Let $ 0<R<1$, $0<\delta<1$, $ t>1$,  $ \phi \in B_R$ and set $\psi=T_t(\phi)$.   Then by Lemma \ref{into_lemma} there is some $C$ (independent of the parameters) such that 
\[ \| \psi \|_X \le C \left\{ R^p + \sup_{B_\delta} |g| + \frac{1}{ t^\frac{p}{p-1} \delta^{ (N-1)p-\sigma-2}} \right\},\] and hence for $\psi \in B_R$ its sufficient that
\begin{equation} \label{into_cond_first}
C \left\{ R^p + \sup_{B_\delta} |g| + \frac{1}{ t^\frac{p}{p-1} \delta^{ (N-1)p-\sigma-2}} \right\} \le R.
\end{equation} \\

\noindent
\emph{Contraction.}  Let $ 0<R<1$, $0<\delta<1$, $ t>1$,  $ \phi_i \in B_R$ and set $\psi_i=T_t(\phi_i)$.  Then by Lemma \ref{contraction_lemma} we have 
\[ \| \psi_2 - \psi_1 \|_X \le C \left\{ R^{p-1} + \sup_{B_\delta}|g| + \frac{1}{ t \delta^{\alpha-1} } \right\} \|\phi_2 - \phi_1\|_X,\] and hence for $T_t$ to be a contraction its sufficient that 

\begin{equation} \label{contraction_cond_first}
C \left\{ R^{p-1} + \sup_{B_\delta}|g| + \frac{1}{ t \delta^{\alpha-1} }  \right\} \le \frac{1}{2}.
\end{equation}

We now choose the parameters.   Note we see we can satisfy both (\ref{into_cond_first}) and (\ref{contraction_cond_first}) by first taking $R>0$ sufficiently  small,  then taking $ \delta>0$ sufficiently small and then finally taking $ t$ large.   

We now show $u>0$ in $B_1$.    By taking $ R>0$ smaller if necessary we see that $u(x)>0$ for $0<|x|<\E$ for some small $ \E>0$.      We can then apply maximum principle arguments to see that $u>0$ on $\E<|x|<1$.  
\hfill $\Box$

  \section{$ -\Delta u = | \nabla u|^p$ in general bounded  domains; proof of Theorem  \ref{main_non_lin}  part 2} \label{gen_dom_sec}
  
   Without loss of generality we suppose $ 0 \in B_{10 s_0} \subset \Omega \subset \subset B_1$ where $s_0>0$ (for the general case we can perform the needed translation).  Let $ 0 \le \zeta \in C_c^{\infty}(B_{2 s_0})$ with $ \zeta=1$ in $B_{s_0}$,  and   let $ 0 \le \eta \in C_c^\infty(B_{4 s_0})$ with $ \eta =1$ in $B_{2s_0}$ (and both bounded above by $1$). Note $ \zeta \eta = \zeta$.  We look for solutions $u$ of (\ref{eq_intro_2}) of the form $ u(x) = u_t(x) \eta(x)+\phi(x)$   where $ \phi=0$ on $ \pOm$ is the unknown.   Then $u$ is a solution provided $ \phi$ satisfies
      \begin{eqnarray} \label{non_lin_phi_second}
 \left\{ \begin{array}{lcl}
\hfill   L_t(\phi)  &=& u_t \Delta \eta + 2 \nabla \eta \cdot \nabla u_t- \eta | \nabla u_t|^p+| \nabla (u_t \eta)+ \nabla \phi|^p-p| \nabla u_t|^{p-2} \nabla u_t\cdot \nabla\phi  \qquad \mbox{ in }  \Omega \backslash \{0\},  \\
\hfill  \phi &=& 0 \hfill \mbox{ on }  \pOm,
\end{array}\right.
  \end{eqnarray}  
where $L_t$ is as before. \\
We now state our main linear result for $L_t$ on $\Omega$.    Consider the linear problem given by 
  \begin{eqnarray} \label{lin_gen_dom}
 \left\{ \begin{array}{lcl}
\hfill   L_t(\phi)  &=& f \qquad \mbox{ in }  \Omega \backslash \{0\},  \\
\hfill  \phi &=& 0 \hfill \mbox{ on }  \pOm.
\end{array}\right.
  \end{eqnarray}  We define $X$ and $Y$ as the obvious extension of the spaces on the unit ball; 
  \[ \|f\|_Y:= \sup_\Omega |x|^{\sigma+2} |f(x)|, \qquad \| \phi \|_X:=\sup_\Omega \left\{ |x|^\sigma | \phi(x)| + |x|^{\sigma+1} | \nabla \phi(x)| \right\},\] where for the space $X$ we imposed the boundary condition $ \phi=0$ on $ \pOm$. 

\begin{prop}  \label{lin_gen_domain}  There is some $C>0$ and $t_0$ (large) such that for all  $f \in Y$ there is some $ \phi \in X$ which satisfies (\ref{lin_gen_dom}).  Moreover one has the estimate $\| \phi \|_X \le C \|f\|_Y$. 
\end{prop}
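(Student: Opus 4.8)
The plan is to reduce the problem on the general bounded domain $\Omega$ to the already-established linear theory on $B_1$ (Proposition \ref{main_linear}) by a gluing/patching argument, using the cutoff functions $\zeta,\eta$ introduced at the start of Section \ref{gen_dom_sec}. The point is that the weighted norms $X,Y$ only see the singularity at the origin, and the operator $L_t$ differs from $-\Delta$ only near $0$ (since the first-order coefficient $px/(\beta|x|^2+t|x|^{\alpha+1})$ is smooth and bounded away from the origin); so near $0$ we borrow the ball theory and away from $0$ we borrow standard elliptic theory for $-\Delta$ (or $L_t$, which is a harmless bounded perturbation there) on $\Omega$ with zero boundary data, where no weight is needed.

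First I would set up the patching. Given $f\in Y$ on $\Omega$, extend $\zeta f$ (which is supported in $B_{2s_0}\subset B_1$) by zero to all of $B_1$; it lies in $Y(B_1)$ with comparable norm. Apply Proposition \ref{main_linear} to obtain $\phi_1\in X(B_1)$ with $L_t(\phi_1)=\zeta f$ in $B_1\setminus\{0\}$, $\phi_1=0$ on $\partial B_1$, and $\|\phi_1\|_{X(B_1)}\le C\|\zeta f\|_{Y(B_1)}\le C\|f\|_Y$. Next, solve the regular part: let $\phi_2$ solve $L_t(\phi_2)=(1-\zeta)f - [L_t,\eta\zeta](\text{something})$ — more precisely, I would look for the true solution in the form $\phi=\eta\zeta\,\phi_1+\phi_2$ where $\phi_2$ vanishes on $\partial\Omega$ and solves $L_t(\phi_2)=f-L_t(\eta\zeta\,\phi_1)$ in $\Omega$. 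Since $\eta\zeta=\zeta$ equals $1$ on $B_{s_0}$, on $B_{s_0}$ we have $L_t(\eta\zeta\phi_1)=L_t(\phi_1)=\zeta f=f$, so the right-hand side $f-L_t(\eta\zeta\phi_1)$ vanishes on a neighborhood of the origin; hence it is a bounded $L^\infty(\Omega)$ (indeed Hölder, after choosing representatives) function supported away from $0$, and standard elliptic theory for the uniformly elliptic operator $L_t$ on the smooth domain $\Omega$ with zero Dirichlet data produces a classical $\phi_2$ with $\|\phi_2\|_{C^1(\overline\Omega)}\le C\|f-L_t(\eta\zeta\phi_1)\|_{L^\infty}$, and since the source is supported in $\{|x|\ge s_0\}$ this is $\le C\|f\|_Y$ with $C$ depending on $s_0$. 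Because $\phi_2$ is bounded in $C^1$ and $\Omega$ is bounded, $\|\phi_2\|_X\le C(s_0)\|\phi_2\|_{C^1}\le C\|f\|_Y$. Finally $\phi:=\eta\zeta\phi_1+\phi_2$ lies in $X$, vanishes on $\partial\Omega$, solves $L_t(\phi)=f$ in $\Omega\setminus\{0\}$, and satisfies $\|\phi\|_X\le C\|f\|_Y$ with constants uniform for $t\ge t_0$ (the only $t$-dependence entered through Proposition \ref{main_linear}, which is already uniform for large $t$, and through the elliptic constant of $L_t$ on $\{|x|\ge s_0\}$, which is uniformly bounded since the drift term there is bounded uniformly in $t$).

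The one genuine subtlety — and the main obstacle to watch — is the well-posedness of the ``regular part'' solve: $L_t$ on $\Omega$ with zero boundary data is only guaranteed to be invertible if $0$ is not an eigenvalue, i.e. one needs a maximum-principle-type statement. But this is exactly the observation made in the introduction: $L_t(\phi_2)=-\Delta\phi_2 - b_t(x)\cdot\nabla\phi_2$ with $b_t\in L^\infty(\Omega)$ (it is in fact bounded uniformly in $t\ge 1$ on $\Omega$, blowing up only at $0$ where the source vanishes; if one prefers, first solve on $\Omega\setminus\overline{B_{s_0/2}}$ and patch again, or note $b_t$ is integrable enough), so the weak maximum principle applies and $L_t$ is an isomorphism from $\{u\in W^{2,q}(\Omega): u=0 \text{ on }\partial\Omega\}$ onto $L^q(\Omega)$ for $q>N$, giving the needed $C^{1,\delta}(\overline\Omega)$ estimate by Sobolev embedding. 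I would also double-check that the commutator terms created by multiplying $\phi_1$ by $\eta\zeta$ — namely $\phi_1\Delta(\eta\zeta)$, $2\nabla(\eta\zeta)\cdot\nabla\phi_1$, and the drift contribution $-p|\nabla u_t|^{p-2}\nabla u_t\cdot\nabla(\eta\zeta)\,\phi_1$ — are all supported in the annulus $\{s_0\le|x|\le 2s_0\}$, where $\phi_1$ and $\nabla\phi_1$ are bounded (by $\|\phi_1\|_X$ and the weights being comparable to constants there) and $u_t,\nabla u_t$ converge uniformly to $0$; hence these terms are bounded in $L^\infty(\Omega)$ by $C(s_0)\|f\|_Y$, which is what the $\phi_2$ estimate above consumed. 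With these points verified the proof is complete.
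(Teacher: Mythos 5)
There is a genuine gap in your argument, and it is precisely the point the paper's gluing is designed to circumvent. Your ``regular part'' step asks for a solution of $L_t(\phi_2)=g$ on all of $\Omega$ with Dirichlet data, with $g$ supported in $\{|x|\ge s_0\}$, and you invoke ``standard elliptic theory for the uniformly elliptic operator $L_t$ on the smooth domain $\Omega$.'' But $L_t=-\Delta - b_t\cdot\nabla$ with $b_t(x)=\frac{-px}{\beta|x|^2+t|x|^{\alpha+1}}$, so $|b_t(x)|\sim \frac{p}{\beta|x|}$ as $x\to 0$. This drift is \emph{not} in $L^\infty(\Omega)$ and, worse, $b_t\in L^q(\Omega)$ only for $q<N$, which is outside the range $q>N$ (or $q\ge N$ with smallness) required by the $W^{2,q}$ or even the weak-maximum-principle theory you cite. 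Your parenthetical ``$b_t\in L^\infty(\Omega)$ $\ldots$ blowing up only at $0$'' is self-contradictory, and ``integrable enough'' is false in the relevant range. The vanishing of $g$ near $0$ does not rescue this: the \emph{operator} is still singular there, so solving for $\phi_2$ on $\Omega$ with $L_t$ is essentially as hard as the problem you started with — it requires the very weighted linear theory you are trying to prove, which is circular.

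The paper's ansatz $\phi=\eta\varphi+\psi$ (rather than your $\phi=\zeta\phi_1+\phi_2$) is chosen exactly so that the singular drift never appears uncut in the outer problem. In the paper, $\psi$ satisfies an equation whose drift coefficient carries the factor $(1-\zeta)$: the operator $-\Delta+\frac{(1-\zeta)p\,x\cdot\nabla}{\beta|x|^2+t|x|^{\alpha+1}}$ has \emph{bounded} coefficients on $\Omega$ (indeed uniformly small for $t$ large), so standard elliptic theory genuinely applies. The price is that the two equations for $\varphi$ and $\psi$ are now coupled — the $\psi$-equation feeds into the source of the $\varphi$-equation and vice versa — and one must run a contraction argument on $\varphi$, with the smallness coming from the factor $\delta_t=\sup_\Omega |x|^{\sigma+1}/(\beta+t|x|^{\alpha-1})\to 0$. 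Your alternative suggestion, ``first solve on $\Omega\setminus\overline{B_{s_0/2}}$ and patch again,'' is essentially the right instinct, but patching at the artificial interface $\partial B_{s_0/2}$ creates mismatch errors that must themselves be absorbed iteratively; a one-shot patch does not close. In short, the approach needs to either (i) cut the drift in the outer equation (accepting the resulting coupling and a fixed-point argument, as the paper does), or (ii) prove a weighted solvability theory for the full $L_t$ on $\Omega$ directly — which is the statement of Proposition \ref{lin_gen_domain} itself.
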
 In the next section we give the proof of this result.   We mention the proof we use utilizes a gluing procedure that  is heavily motivated  by  the approach in \cite {davila}.  \\

\noindent 
\textbf{The fixed point argument.}   We write the first equation in (\ref{non_lin_phi_second}) as 
\[ L_t(\phi)  = \sum_{k=1}^4 I_k \qquad \mbox{ in }  \Omega \backslash \{0\},\] 
where
\[ I_1= u_t \Delta \eta + 2 \nabla \eta \cdot \nabla u_t,\] 
  \[ I_2 = | \nabla (u_t \eta)|^p - \eta | \nabla u_t|^p,\]
  \[I_3 = | \nabla (u_t \eta)+ \nabla \phi|^p- | \nabla (u_t \eta)|^p - p | \nabla (u_t \eta)|^{p-2} \nabla (u_t \eta) \cdot \nabla \phi,\] 
  \[I_4= p\left\{ | \nabla (u_t \eta)|^{p-2} \nabla (u_t \eta) - | \nabla u_t|^{p-2} \nabla u_t \right\} \cdot \nabla \phi.\] 
Now let $t_0>0$ be from Proposition \ref{lin_gen_domain}. For $t>t_0$ define
\[ \E_t:=\sup_{|x|>2 s_0} \left\{ |I_1|+  |I_2|+  \Big|| \nabla (u_t \eta)|^{p-2} \nabla (u_t \eta) - | \nabla u_t|^{p-2} \nabla u_t \Big| \right\}.\]    Using the convergence of $u_t$ and $ \nabla u_t$ to zero away from the origin one sees that $\E_t \rightarrow 0$ as $ t \rightarrow \infty$,  and one can get explicit estimates on $\E_t$, but we won't need them.  \\

\noindent
\emph{Into.}  Let $0<R<1$, $ t>t_0$,  $ \phi \in B_R \subset X$ and set $\psi=T_t(\phi)$.   Then we have 
\[ \| \psi\|_X \le C \sum_{k=1}^4 \|I_k \|_Y \le C_0 \E_t + C_0 \sum_{k=3}^4 \| I_k\|_Y,\] and note one easily sees that 
\[ \|I_4 \|_Y =\sup_{|x|>2s_0} |x|^{\sigma+2} | I_4| \le C_2 \E_t R.\] Using (\ref{ineq_1}) sees that $ \| I_3 \|_Y \le C R^p$.  So we see that for $T_t(B_R) \subset B_R$ its sufficient that 
\begin{equation} \label{into_gen_dom}
C \E_t + C \E_t R + C R^p \le R.
\end{equation} \\

\noindent
\emph{Contraction.}  Let $ 0<R<1$, $ t>t_0$ and $ \phi_i \in B_R$.  Set $ \psi_i= T_T(\phi_i)$ and then note that we have 
\[ \big| L_t(\psi_2 - \psi_1) \big| \le C \left\{ | \nabla \phi_2|^{p-1} + | \nabla \phi_1|^{p-1} \right\} | \nabla \phi_2 - \nabla \phi_1| + \E_t \chi_{ \{|x|>2s_0\}}| \nabla \phi_2 - \nabla \phi_1|,\]  where the first term on right is coming from applying (\ref{ineq_2}) and the second term on the right is coming from the $I_4$ term and $ \chi_A$ is the characteristic function of $A$.    From this we obtain 
\[ \| \psi_2 - \psi_1 \|_X \le \left( C R^p + C \E_t \right) \| \phi_2 - \phi_1\|_X,\] and hence for $T_t$ to be a contraction it is sufficient that 
$R^p + C \E_t \le \frac{1}{2}$.       We now pick the parameters.   By first taking $0<R<1$ sufficiently small and then $t$ large one sees they can easily satisfy the two needed conditions.    

We argue as before to show the solution we get $u$ is indeed singular at the origin and is positive in $ \Omega$.

\subsection{The linear operator $L_t$ on general bounded domains $\Omega$} 

In this section we prove Proposition \ref{lin_gen_domain}. Let $ \zeta,\eta$ denote the cut offs from the previous section.     We look for solutions $ \phi$ or (\ref{lin_gen_dom}) of the form $ \phi(x)= \eta(x) \varphi(x)+\psi(x)$.  Then a computation shows its sufficient that $ \varphi,\psi$ satisfy 
 \begin{eqnarray} \label{lin_gen_varphi}
 \left\{ \begin{array}{lcl}
\hfill   L_t(\varphi)  &=& \zeta f - \frac{ \zeta p x \cdot \nabla \psi}{ \beta |x|^2 + t |x|^{\alpha+1}}\qquad \mbox{ in }  B_1 \backslash \{0\},  \\
\hfill  \varphi &=& 0 \hfill \mbox{ on }  \partial B_1,
\end{array}\right.
  \end{eqnarray} 

\begin{eqnarray} \label{lin_gen_psi}
 \left\{ \begin{array}{lcl}
\hfill -\Delta \psi + \frac{(1-\zeta) p x \cdot \nabla \psi}{ \beta |x|^2+ t |x|^{\alpha+1}}  &=&(1-\zeta)f + \varphi \Delta \eta + 2 \nabla \eta \cdot \nabla \varphi -\frac{p \varphi (x \cdot \nabla \eta )}{ \beta |x|^2+ t |x|^{\alpha+1}}\qquad \mbox{ in }  \Omega \backslash \{0\},  \\
\hfill  \psi &=& 0 \hfill \mbox{ on }  \pOm.
\end{array}\right.
  \end{eqnarray}

As in \cite{davila} we use a fixed point argument to find a solution $(\varphi,\psi)$.   The general procedure is given $ \varphi$ we solve (\ref{lin_gen_psi}) for $ \psi$.    Then we put this $ \psi $ into the right hand side of (\ref{lin_gen_varphi}) and solve for $ \varphi$, which we call $ \hat{\varphi}$.   This defines a nonlinear mapping $ T^t(\varphi) = \hat{\varphi}$ and if we can show this map has a fixed point,  then we have the desired solution (\ref{lin_gen_dom}); of course one still needs the estimate.  \\

\noindent
\textbf{Proof of Proposition \ref{lin_gen_domain}.} Let $t_0$ be from Proposition  \ref{main_linear} and let  $ C_0$ denote the promised constant $C$.   Take $ f \in Y$ with $ \|f\|_Y=1$.     We now will show that $ T^t$ is a contraction mapping on $B_{2C_0} \subset X$ (the closed ball radius $2C_0$ in $X$ centred at the origin).       \\ 

\noindent 
\emph{Into.}  Let $ \varphi \in B_{2C_0}$ and let $ \psi$ satisfy (\ref{lin_gen_psi}).   Note the advection term is zero near the origin and converges uniformly to zero on the $\Omega$.  So by standard elliptic theory there is some $C>0$ such that for all $ t \ge 0$  one has
$\sup_{\Omega} | \nabla \psi| \le C + C C_0$.   Set $ T^t(\varphi)=\hat{\varphi}$.  Then we have 
\[ \| \hat{ \varphi}\|_X \le C_0 \| \zeta f\|_Y + C_0 \big\|\frac{ \zeta p x \cdot \nabla \psi}{ \beta |x|^2 + t |x|^{\alpha+1}} \big\|_Y, \]  and note $ \| \zeta f \|_Y \le 1$ 
 and  the second term is bounded above by 
\[ C \sup_{\Omega} \frac{| \nabla \psi| |x|^{\sigma+1}}{ \beta + t |x|^{\alpha-1}}  \le C \left( C+C C_0 \right) \sup_{\Omega}  \frac{|x|^{\sigma+1}}{ \beta+ t |x|^{\alpha-1}},\] and note \;  $ \delta_t:=\sup_\Omega \frac{|x|^{\sigma+1}}{ \beta+ t |x|^{\alpha-1}} \rightarrow 0$ as $ t \rightarrow \infty$.   So for large enough $t$ we see that $ \hat{\varphi} \in B_{2C_0}$.  \\ 

\noindent 
\emph{Contraction.}  Let $ \varphi_i \in B_{2C_0}$ and we let $ \psi_i$  solve (\ref{lin_gen_psi}) and we set $ \hat{\varphi}_i=T^t( \varphi_i)$.  Using standard estimates and noting the right hand side of (\ref{lin_gen_psi}) is zero near the origin, one sees that 
\[ \sup_\Omega | \nabla \psi_2 - \nabla \psi_1| \le C_1 \| \varphi_2 - \varphi_1 \|_X\] for all $ t \ge 0$.   Then note we have 
\[ L_t( \hat{ \varphi}_2 - \hat{\varphi}_1 ) =  \frac{-p \zeta x \cdot \nabla(\psi_2 - \psi_1)}{ \beta |x|^2 + t |x|^{\alpha+1}} \quad B_1,\] with $\hat{ \varphi}_2 - \hat{\varphi}_1=0$ on $ \partial B_1$.  So as before we get 
\[ \| \hat{ \varphi}_2 - \hat{ \varphi}_1 \|_X \le C \delta_t \sup_{\Omega} | \nabla (\psi_2 - \psi_1)| \le C \delta_t C_1 \| \varphi_2 - \varphi_1\|_X.\] 

So we see for large enough $t$ we can apply Banach's fixed point theorem and obtain a fixed point $ \varphi$, ie. $ T^t(\varphi)= \varphi$.  Moreover note we have $ \| \varphi \|_X \le 2C_0$. Now recall we have $ \phi= \eta \varphi + \psi$.   Using the $X$ bound on $ \varphi$ and the gradient bound on $ \psi$ we see that $ \| \phi \|_X \le C_2$.  This completes the proof of Proposition \ref{lin_gen_domain}. 
\hfill 
$\Box$

\section{Theorem 2; the exterior domain problem} \label{exterior} 
\noindent
\mbox{\textbf{The parameters for the exterior domain problem:}} $N \ge 3$, $p>\frac{N}{N-1}$, $\alpha:=(p-1)(N-1)$, $\beta:= \frac{p-1}{\alpha-1}$,  $\sigma:=N-2+\E,$  where $\E>0$ is small and note $ \alpha>1$. \\

Here we consider 
\begin{eqnarray} \label{eq_intro_ext}
 \left\{ \begin{array}{lcl}
\hfill   -\Delta u    &=& | \nabla u|^p  \qquad \mbox{ in }  \Omega,  \\
\hfill u &=& 0 \hfill \mbox{ on }  \pOm,
\end{array}\right.
  \end{eqnarray} in the case 
where  $\Omega$ is an exterior domain (with smooth boundary) in $ \IR^N$ with $N \ge 3$ and $ \frac{N}{N-1}<p<2$.  We show there is a positive classical solution of (\ref{eq_intro}).    For simplicity we assume that $B_2^c \subset \subset \Omega \subset \subset B_1^c$ where the $c$ denotes compliment.   We begin by looking at an explicit example the the compliment of the unit ball.

\begin{exam} \label{exam_ext} Let the parameters be as above and set
\[ u_t(r) = \int_1^r \frac{1}{ \left( ty^\alpha - \beta y\right)^\frac{1}{p-1}} dy.\] Then for all $ t>\beta$, $u_t$ is a classical positive solution of (\ref{eq_intro}) in the case of $ \Omega= \overline{B_1}^c$.     Also note that $u_t$ is increasing in $r$ and is bounded.   Also we see that $u_t, \nabla u_t$ converge uniformly to zero on $\overline{B_1}^c$ as $ t \rightarrow \infty$. 
\end{exam}   For notational convenience now, when solving a pde on a ball or an exterior of a ball we will write $ B_r$ or $B_r^c$; of course its understood the domain is always open. 
As in the case of bounded domain $ \Omega$ we will look for a solution of the form $ u(x)= \eta(x) u_t(x) + \phi(x)$ where $\eta$ is a suitable cut off to make $u=0$ on $ \pOm$;   take  $0 \le \eta \le 1$ to be smooth with $ \eta=0$ in $B_2$ and $ \eta=1$ for $ B_3^c$.    As before the linearized operator will be of crucial importance.  We set   
\[ L^t(\phi):=\Delta \phi + p | \nabla u_t|^{p-2} \nabla u_t \cdot \nabla \phi,\]  and an explicit computation shows 
\[ L^t(\phi)= \Delta \phi + \frac{p x \cdot \nabla \phi(x)}{|x| \left( t |x|^\alpha - \beta |x| \right)}.\]      We now choose our function spaces.  As before we define 
\[ \| \phi\|_X:=\sup_\Omega |x|^{\sigma+1} | \nabla \phi(x)|, \qquad \|f\|_Y:=\sup_\Omega |x|^{\sigma+2} |f(x)|,\] where $\sigma$ is to be determined and where the spaces $X$ and $Y$ are defined using the above norms;  the space $X$ we impose $ \phi=0$ on $ \pOm$.  \\

\noindent 
\textbf{The parameter $\sigma$.}  As before we will employ a fixed point argument to obtain $ \phi \in B_R:=\{ \phi \in X: \| \| \phi \|_X \le R\}$ where $R>0$ is small, and where $ u(x)=\eta(x) u_t(x) + \phi(x)$ is a solution.   The order in choosing the parameters will be the same as before;  we will pick $R>0$ small and then take $t$ large.  Recalling that $u_t$ (and its derivatives in $ x$) converge to zero when $ t \rightarrow \infty$ we see there will be a natural hurdle of showing $ u  \neq 0$;  this was not an issue in the previous results since no matter how large $t$ was chosen we had uniform blow up near the origin.  
So returning to the form of our solution we see that  if $ \phi \in B_R$   and $|x|$ large we have 
\[ | \nabla u(x)| \ge \frac{1}{\left( t r^\alpha- \beta r \right)^\frac{1}{p-1}} - \frac{R}{|x|^{\sigma+1}},\] where $ r=|x|$.    From this we see no matter how large $t$ is chosen (or the value of $R$) that if $ \sigma+1>\frac{\alpha}{p-1}$  then for large enough $|x|$ we have $ \nabla u(x) \neq 0$.  With this in mind we choose $ \sigma:=N-2+\E$ where $\E>0$ is small.   One should note that this value of $\sigma$ is  somewhat nonstandard.  A lot of linear theory has been done where $ \sigma \in (0,N-2)$.     Typically the $X$ norm would also have a zero order term given by $ |x|^{\sigma} | \phi(x)|$;  for our value of $\sigma$ we cannot include this term  but this doesn't affect us since we really only need decay of the gradients.     In the next section we will show the desired linear theory that there is some $C>0$ and large $t_0$ such that for all $t>t_0$, for all $ f \in Y$ there is some $ \phi \in X$ which satisfies $ L^t(\phi)=f$ in $\Omega$ with $ \phi=0$ on $ \pOm$.   Moreover one has $ \| \phi \|_X \le C \|f\|_Y$.

\begin{remark} We remark that in our first attempt at proving the needed linear theory for $L^t$ we used a proof similar to the previous sections.   We first considered the result on $B_1^c$ using spherical harmonics and a blow up argument.  We then used the gluing procedure from the previous section to extend this to a general exterior domain.    The result held for all $t$ in the allowed range (except $t$ had to be bounded away from $\beta$).  Later we realized that for large $t$ (and we really only need the result for large $t$) that $L^t$ is really just a perturbation of the Laplacian and hence we can prove the needed result via a more abstract approach.    It is still useful to consider the spherical harmonic approach on $B_1^c$ to see exactly how the zero order estimate fails.   
\end{remark}

\noindent 
\textbf{The nonlinear set up and the fixed point argument.}  Here we follow the general procedure as in the case of a general bounded domain $ \Omega$.  A computation shows that $u$ (as described above) is a solution of (\ref{eq_intro_ext}) if $ \phi$ satisfies 

      \begin{eqnarray}  \label{non_lin_phi_ext}
 \left\{ \begin{array}{lcl}
\hfill   -L^t(\phi)  &=& u_t \Delta \eta + 2 \nabla \eta \cdot \nabla u_t- \eta | \nabla u_t|^p+| \nabla (u_t \eta)+ \nabla \phi|^p-p| \nabla u_t|^{p-2} \nabla u_t\cdot \nabla\phi  \qquad \mbox{ in }  \Omega \backslash \{0\},  \\
\hfill  \phi &=& 0 \hfill \mbox{ on }  \pOm,
\end{array}\right.
  \end{eqnarray} and as before we rewrite this as  
\[ -L^t(\phi)  = \sum_{k=1}^4 I_k \qquad \mbox{ in }  \Omega \backslash \{0\},\] 
where
\[ I_1= u_t \Delta \eta + 2 \nabla \eta \cdot \nabla u_t, \quad I_2 = | \nabla (u_t \eta)|^p - \eta | \nabla u_t|^p,\]
  \[I_3 = | \nabla (u_t \eta)+ \nabla \phi|^p- | \nabla (u_t \eta)|^p - p | \nabla (u_t \eta)|^{p-2} \nabla (u_t \eta) \cdot \nabla \phi,\] 
  \[I_4= p\left\{ | \nabla (u_t \eta)|^{p-2} \nabla (u_t \eta) - | \nabla u_t|^{p-2} \nabla u_t \right\} \cdot \nabla \phi.\]   Note that $I_1=I_2=0$ in $B_2 \cup B_3^c$.  Also we have $I_4=0$ in $B_3^c$.   Similiar to before we set 
  \[\E_t:=\sup_{\Omega} \left\{ |I_1|+  |I_2|+  \Big|| \nabla (u_t \eta)|^{p-2} \nabla (u_t \eta) - | \nabla u_t|^{p-2} \nabla u_t \Big| \right\},\]  and note this is really a sup of $\Omega \cap B_3$ and hence its trivial to see $ \E_t \rightarrow 0$ as $ t \rightarrow \infty$ after taking into account the behaviour of $u_t$ for large $t$.      We now consider the fixed point argument.   Consider $T^t(\phi)=\psi$ where  
  \[ -L^t(\psi)= \sum_{k=1}^4 I_k(\phi) \;  \mbox{ in }  \Omega, \quad  \psi =0 \; \mbox{ on } \pOm,\]  where we are writing $I_k(\phi)$ to indicate the $ \phi$ dependence. \\ 
  
  \noindent
  \emph{Into.} Let $ 0<R<1$, $ t>t_0$, $ \phi \in B_R \subset X$ and $ \psi=T^t(\phi)$.  Then we have 
  \[ \| \psi\|_X \le C\E_t \left(1+R \right) + C \|I_3 \|_Y,\] where this last term will depend on whether $ p \le 2$ or $ p >2$.   We first consider the case of $ p \le 2$; and in this case we use (\ref{ineq_1}) with $ x= \nabla( u_t \eta)$, $y=\nabla \phi$ to arrive at 
  \[ \|I_3 \|_Y \le C \sup_\Omega |x|^{\sigma+2} | \nabla \phi|^p \le C R^p \sup_{\Omega} |x|^{\sigma+2 -p(\sigma+1)}, \qquad \mbox{(case $p \le 2$)},\] which is bounded by $CR^p$ provided $\sigma+2-p(\sigma+1) \le 0$ which is in fact the case after recalling the value of $\sigma=N-2+\E$.  For $p >2$ we will use the following inequality
  \[ \Big| |x+y|^p - |x|^p - p |x|^{p-2} x \cdot y \Big| \le C |y|^p + C |x|^{p-2} |y|^2, \qquad x,y \in \IR^N,\] and after taking $x$ and $y$ as above gives 
  \[ \|I_3 \|_Y \le C R^p + CR^2\sup_\Omega |x|^{\sigma+2} | \nabla(u_t \eta)|^{p-2} |x|^{-2\sigma-4}.\] Considering the convergence to zero of $\nabla u_t$ and $u_t$ we see the only possible issue of the second term is for large $x$. For large $x$ note that 
  \[ | \nabla (u_t \eta)|^{p-2} \le  \frac{C}{ t^\frac{p-2}{p-1} |x|^{(N-1)(p-2)}}.\] Using this we can substiture into the above (after taking into account the value of $\sigma$) to arrive at:   there is some $\hat{\E}_t \rightarrow 0$ as $ t \rightarrow \infty$ such that 
  \[ \|I_3 \|_Y \le C R^p + C \hat{\E}_t R^2, \qquad \mbox{(case $p>2$).}\]  So for $T^t(B_R) \subset B_R$ (in either case) its sufficient that 
  \begin{equation} \label{into_ext_form}
  C \left( \E_t(1+R) + R^p +\hat{\E}_t R^2 \right) \le R.
  \end{equation}  \\

    \noindent
  \emph{Contraction.} Let $ 0<R<1$, $ t>t_0$, $ \phi_i \in B_R \subset X$ and $ \psi_i=T^t(\phi_i)$.  Then note we have 
  \[ |I_4(\phi_2)-I_4(\phi_1)| \le \E_t \chi_{B_3}(x) | \nabla \phi_2 - \nabla \phi_1|,\] and hence $ \| I_4(\phi_2)-I_4(\phi_1)\|_Y \le C \E_t \| \phi_2 - \phi_1 \|_X.$  To examine the $I_3$ term we use (\ref{ineq_3}) with $ x= \nabla (u_t \eta)$ and $y=\nabla \phi_2, z=\nabla \phi_1$ to arrive at 
  \[ |I_3(\phi_2)-I_3(\phi_1)|\le C \left\{ | \nabla(u_t \eta)|^{p-1} + | \nabla \phi_2|^{p-1} + | \nabla \phi_1|^{p-1} \right\} | \nabla \phi_2 - \nabla \phi_1|.\]  Using these estimates we see 
  \[ \|I_3(\phi_2)-I_3(\phi_1)\|_Y \le C \left( R^{p-1}+ \sup_\Omega |x|| \nabla (u_t \eta)|^{p-1} \right) \| \phi_2 - \phi_1 \|_X.\] A computation shows that 
  \[ | \nabla (u_t \eta)|^{p-1} \le \frac{C}{t |x|^\alpha} \quad \mbox{ in } \Omega,\] for large $t$. Using this and the fact that $ \alpha>1$ we see that 
  \[  \|I_3(\phi_2)-I_3(\phi_1)\|_Y \le C \left( R^{p-1}+ \frac{1}{t} \right) \| \phi_2 - \phi_1 \|_X,\]  and hence for $T^t$ to be a contraction on $B_R$ its sufficient that 
  \begin{equation} \label{contr_exter}
   C \left( R^{p-1}+ \frac{1}{t} \right) \le \frac{1}{2}.
   \end{equation}   We now choose the parameters $ R$ and $t$.  By taking $ R>0$ sufficiently small and fixing and then taking $ t$ large we see that we can satisfy (\ref{into_ext_form}) and (\ref{contr_exter}).  We can now apply the Banach's fixed point theorem to see there is some $ \phi \in B_R$ such that $T^t(\phi)=\phi$.   As noted earlier for large $x$ we have $ \nabla u(x) \neq 0$ and hence we know $u$ is not identically zero.  Also note that a computation shows 
   \[ \nabla u(x) \cdot x \ge \frac{1}{r^{N-2}} \left( \frac{1}{ ( t -\beta r^{1-\alpha})^\frac{1}{p-1}} - \frac{R}{r^\E} \right),\] where $ r=|x|$.  So we see for large $|x|$ that $u$ is increasing in the radial direction.   Now we show $u$ is positive.  Suppose not,  then using the monotonicity in the radial direction we see there is some $x_0 \in \Omega$ such that $ \min_\Omega u=u(x_0) \le 0$.   Then we can use the strong maximum principle to see that $ u=u(x_0)$ in $ \Omega$; a contradiction.

\subsection{The linear theory}

We begin with a theorem regarding the mapping properties of the Laplacian and for this we need to define a new norm.  Consider 
\[ \| \phi \|_{\widehat{X}}:=\sup_{\Omega} \left\{ |x|^{\sigma+1} | \nabla \phi(x)| + |x|^{\sigma+2} | \Delta \phi(x)| \right\},\] and we set $\widehat{X}:=\{ \phi: \| \phi \|_{\widehat{X}}<\infty,  \mbox{ and } \phi=0  \mbox{  on } \partial \Omega \}$. 

\begin{thm} \label{laplace} The mapping $ \Delta: \widehat{X} \rightarrow Y$ is continuous, linear, one to one and onto with continuous inverse. 
\end{thm}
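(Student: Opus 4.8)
The plan is to verify by hand three things: (a) injectivity, (b) surjectivity, (c) the bound $\|\phi\|_{\widehat X}\le C\|\Delta\phi\|_Y$. Since $\Delta\colon\widehat X\to Y$ is visibly linear and bounded and both spaces are Banach (a $\widehat X$-Cauchy sequence converges in $C^1_{loc}(\overline\Omega)$, the condition $\phi=0$ on $\partial\Omega$ together with the weighted gradient bound pinning down $\phi$ on compacts via path integration), (b) and (c) produce a bounded right inverse which by (a) is the two-sided bounded inverse, which is the assertion. Throughout, keep in mind the one unusual feature of $\widehat X$: it controls $\nabla\phi$ and $\Delta\phi$ but not $\phi$ itself, and it must be so, since a bounded harmonic function decaying at infinity is $\Theta(|x|^{2-N})$ and $|x|^{\sigma}|x|^{2-N}=|x|^{\varepsilon}$ is unbounded. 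This same failure will be the crux of the proof.

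\noindent\textbf{Injectivity.} Suppose $\phi\in\widehat X$ is harmonic. From $|\nabla\phi(x)|\le\|\phi\|_{\widehat X}|x|^{-\sigma-1}$ with $\sigma+1=N-1+\varepsilon>1$, integrating along rays shows $\phi(x)\to L$ at infinity, with $L$ independent of direction (the oscillation of $\phi$ over $|x|=s$ is $O(s^{-\sigma})\to0$) and $|\phi(x)-L|\le C|x|^{-\sigma}$. If $L\ne0$, then $\phi-L$ is a nonzero bounded harmonic function on $\Omega$ equal to $-L$ on $\partial\Omega$ and tending to $0$, hence $\phi-L=-L\,U$ with $U$ the capacitary potential of $\IR^N\setminus\Omega$, and $U(x)=c\,|x|^{2-N}(1+o(1))$ with $c>0$ since the complement has nonempty interior; then $|x|^{\sigma+1}|\nabla\phi(x)|\ge c'|L|\,|x|^{\varepsilon}\to\infty$, contradicting $\phi\in\widehat X$. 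So $L=0$, and a harmonic $\phi$ vanishing on $\partial\Omega$ and at infinity is $\equiv0$ by the maximum principle for exterior domains ($N\ge3$).

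\noindent\textbf{Surjectivity and the estimate.} Given $f\in Y$ (locally H\"older, as we may assume), first extend it by $0$ off $\Omega$; since $\overline\Omega\subset\{|x|>1\}$ and $\sigma+2=N+\varepsilon>N$, the extension $\tilde f$ lies in $L^1\cap L^\infty(\IR^N)$ with both norms $\le C\|f\|_Y$. Take $w$ its Newtonian potential ($\Delta w=\tilde f$ in $\IR^N$, $w\in C^1_{loc}(\IR^N)\cap C^2_{loc}(\Omega)$) and set $\phi_1:=w-\gamma_N m|x|^{2-N}$, $m:=\int\tilde f$, $|m|\le C\|f\|_Y$; then $\Delta\phi_1=f$ in $\Omega$. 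Writing $\phi_1(x)=\gamma_N\int(|x-y|^{2-N}-|x|^{2-N})\tilde f(y)\,dy$ and splitting into $|y|\le|x|/2$ (mean value theorem: $|\,|x-y|^{2-N}-|x|^{2-N}\,|\le C|x|^{1-N}|y|$, and $\int_{1\le|y|\le|x|/2}|y|^{-\sigma-1}dy\le C|x|^{1-\varepsilon}$, so this piece is $\le C\|f\|_Y|x|^{2-N-\varepsilon}=C\|f\|_Y|x|^{-\sigma}$) and $|y|>|x|/2$ (use $|y|^{-\sigma-2}\le C|x|^{-\sigma-2}$ there and integrate the kernel singularity directly), one gets $|\phi_1(x)|\le C\|f\|_Y|x|^{-\sigma}$ for large $|x|$, hence on all of $\Omega$. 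Interior gradient estimates on $B_{|x|/2}(x)$ then give $|\nabla\phi_1(x)|\le C\|f\|_Y|x|^{-\sigma-1}$, so $\phi_1$ obeys all the $\widehat X$ bounds with constant $C\|f\|_Y$ except the boundary condition. To correct the boundary values, pick $R_0$ with $\IR^N\setminus\Omega\subset B_{R_0}$; on $\{|x|>R_0\}\subset\Omega$ a bounded harmonic function $h$ has a convergent multipole expansion whose $\ell=0$ term $a_0(h)|x|^{2-N}$ alone produces a $\Theta(|x|^{1-N})$ contribution to $\nabla h$, the $\ell\ge1$ terms contributing $O(|x|^{-N})$, with all coefficients controlled by $\|h\|_{L^\infty}$. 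Let $h_0$ be harmonic in $\Omega$ with $h_0=-\phi_1$ on $\partial\Omega$, $h_0\to0$, and $H$ harmonic with $H=0$ on $\partial\Omega$, $H\to1$ (so $a_0(H)\ne0$, $1-H$ being the capacitary potential), and set $h:=h_0+c_*H$ with $c_*:=-a_0(h_0)/a_0(H)$. Then $h=-\phi_1$ on $\partial\Omega$ and $a_0(h)=0$, so $\nabla h(x)=O(|x|^{-N})$; the constants depend linearly on $\|\phi_1\|_{C^0(\partial\Omega)}\le C\|f\|_Y$, and since $|x|^{\sigma+1}|x|^{-N}=|x|^{\varepsilon-1}$ is bounded we get $|x|^{\sigma+1}|\nabla h|\le C\|f\|_Y$ (the bounded part by a Schauder estimate). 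Finally $\phi:=\phi_1+h$ solves $\Delta\phi=f$ in $\Omega$, $\phi=0$ on $\partial\Omega$, with $\|\phi\|_{\widehat X}\le C\|f\|_Y$, which is (b) together with (c), so $\Delta^{-1}$ is continuous.

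\noindent\textbf{The main obstacle.} I expect the only genuinely delicate point to be the \emph{monopole}. The naive particular solution (the Newtonian potential) and the harmonic extension of the boundary data each carry a term $\sim|x|^{2-N}$ whose gradient decays only like $|x|^{1-N}$, whereas $\widehat X$ demands $|\nabla\phi|=O(|x|^{-\sigma-1})=O(|x|^{1-N-\varepsilon})$, strictly faster; hence both monopoles must be removed --- by the explicit subtraction in $\phi_1$ and by the tuning of $c_*$. This is precisely the phenomenon the paper's Remark has in mind: on $B_1^c$ it appears as the $k=0$ radial mode, for which the unique solution in the weighted space is the one forcing $a_0'(1)=-\int_1^\infty s^{N-1}b_0(s)\,ds$ --- the boundary derivative is not free, which is ``how the zero order estimate fails''. (Alternatively one could prove the theorem on $B_1^c$ by this spherical-harmonic ODE analysis and then glue to a general $\Omega$ as in \cite{davila}, but the global potential-theoretic route above avoids the gluing; everything else in it --- the integral splittings, the interior and Schauder estimates, the exterior maximum principle --- is routine.)
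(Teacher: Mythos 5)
Your proof is correct (modulo the routine details you flag) but takes a genuinely different route from the paper's, in both halves of the argument. For injectivity, the paper uses a short energy argument: multiply $\Delta\phi=0$ by $\phi$, integrate over $\Omega\cap B_R$, and use $\sigma=N-2+\E$ to make the flux through $\partial B_R$ decay like $R^{-\E}$; you instead analyze the limit $L$ of $\phi$ at infinity and rule out $L\neq0$ by identifying $\phi-L$ with a multiple of the capacitary potential, whose gradient decays only like $|x|^{1-N}$, just too slowly for $\widehat X$. For surjectivity and the a priori bound, the paper is nonconstructive: it solves the problem on $\Omega\cap B_{R_m}$ with a Neumann condition on the outer sphere, proves the uniform estimate by a blow-up/contradiction argument in three scaling regimes (rescaling $\phi_m$ by $s_m$ or $R_m$ and invoking Lemma \ref{kernel_lap} for both the $\IR^N\setminus\{0\}$ and the Neumann-on-$B_1$ limit problems), and passes to the limit; you instead build $\phi$ directly as the Newtonian potential of the zero-extension of $f$ minus its monopole, estimate by splitting the kernel integral, and then add a harmonic correction $h_0+c_*H$ whose own monopole is tuned to zero using the capacitary potential. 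Your construction is cleaner to read and exposes the monopole obstruction explicitly (the same phenomenon the paper alludes to as the failure of the zero-order estimate in the $k=0$ mode); the paper's compactness route is less explicit but more robust — it is the same machinery used for $L_t$ on $B_1$ and would survive a perturbation of the operator for which no explicit fundamental solution is available. Both deliver the same conclusion, and you correctly note that since $\widehat X$ controls only $\nabla\phi$ and $\Delta\phi$, the nonzero limit $c_*$ of your corrector $h$ at infinity is harmless.
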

As a corollary of this will obtain results regarding the 
 solvability of 
\begin{eqnarray} \label{Lin_eq_gen}
 \left\{ \begin{array}{lcl}
\hfill   L^t (\phi)(x)    &=& f(x) \qquad \mbox{ in }  \Omega,  \\
\hfill \phi &=& 0 \hfill \mbox{ on }  \pOm.
\end{array}\right.
  \end{eqnarray}

\begin{cor} \label{general_domain_prop} There is some $t_0$ large and $C$ such that for all $ t >t_0$ and for all $f \in Y$ there is some $\phi \in X$ that satisfies (\ref{Lin_eq_gen}).  Moreover $\| \phi \|_X \le C \|f\|_Y$.  
\end{cor}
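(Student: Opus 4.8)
\noindent
\textbf{Proof of Corollary \ref{general_domain_prop} (sketch).}
The plan is to treat $L^t$ as a first-order perturbation of the Laplacian and invert it via Theorem \ref{laplace} together with a contraction argument, using that the advection term of $L^t$ becomes uniformly small on $\Omega$ as $t\to\infty$. Write $L^t(\phi)=\Delta\phi+B_t(\phi)$ where
\[ B_t(\phi)(x):=\frac{p\,x\cdot\nabla\phi(x)}{|x|\left(t|x|^\alpha-\beta|x|\right)},\]
so that solving (\ref{Lin_eq_gen}) amounts to solving $\Delta\phi=f-B_t(\phi)$ with $\phi=0$ on $\partial\Omega$. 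Since $\Omega\subset B_1^c$ we have $|x|\ge 1$ on $\Omega$, and because $\alpha=(p-1)(N-1)>1$, for every $t>\beta$ the denominator $t|x|^\alpha-\beta|x|=|x|(t|x|^{\alpha-1}-\beta)$ is positive on $\Omega$. By Theorem \ref{laplace}, $\Delta:\widehat X\to Y$ is an isomorphism; let $M$ denote the operator norm of its inverse. Then (\ref{Lin_eq_gen}) is equivalent to the fixed point equation $\phi=\Delta^{-1}(f-B_t(\phi))$ on $\widehat X$.

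The key estimate is that $B_t:\widehat X\to Y$ is bounded with norm tending to $0$ as $t\to\infty$. Indeed, using $|x|^{\sigma+1}|\nabla\phi(x)|\le\|\phi\|_X$ and the monotonicity of $|x|\mapsto (t|x|^{\alpha-1}-\beta)^{-1}$,
\[ \|B_t(\phi)\|_Y\le p\sup_\Omega\frac{|x|^{\sigma+2}|\nabla\phi(x)|}{|x|\left(t|x|^{\alpha-1}-\beta\right)}\le p\,\|\phi\|_X\sup_\Omega\frac{1}{t|x|^{\alpha-1}-\beta}=p\,\delta_t\,\|\phi\|_X,\]
where $\delta_t:=(t-\beta)^{-1}\to 0$ as $t\to\infty$. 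In particular $f-B_t(\phi)\in Y$ for $f\in Y$ and $\phi\in\widehat X$, so $S_t(\phi):=\Delta^{-1}(f-B_t(\phi))$ is a well-defined map of $\widehat X$ into itself, and since $\|\phi\|_X\le\|\phi\|_{\widehat X}$ its Lipschitz constant satisfies
\[ \|S_t(\phi_2)-S_t(\phi_1)\|_{\widehat X}\le M\,\|B_t(\phi_2-\phi_1)\|_Y\le Mp\delta_t\,\|\phi_2-\phi_1\|_{\widehat X}.\]
Fix $t_0>\beta$ large enough that $Mp\delta_t\le\frac12$ for all $t\ge t_0$.

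For such $t$, Banach's fixed point theorem gives a (unique) $\phi\in\widehat X$ with $\Delta\phi=f-B_t(\phi)$, i.e. $L^t(\phi)=f$ in $\Omega$, and $\phi=0$ on $\partial\Omega$ since this boundary condition is built into $\widehat X$; moreover $\phi\in\widehat X\subset X$. Finally, from the fixed point relation and the bound above,
\[ \|\phi\|_{\widehat X}=\|\Delta^{-1}(f-B_t(\phi))\|_{\widehat X}\le M\|f\|_Y+Mp\delta_t\|\phi\|_{\widehat X}\le M\|f\|_Y+\tfrac12\|\phi\|_{\widehat X},\]
whence $\|\phi\|_X\le\|\phi\|_{\widehat X}\le 2M\|f\|_Y=:C\|f\|_Y$, as claimed. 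The only nontrivial ingredient is Theorem \ref{laplace} itself; granting it, the remaining argument is the routine perturbation bookkeeping above, and the one point to watch is that $\Omega$ stays bounded away from the origin, so the zero set $|x|=(\beta/t)^{1/(\alpha-1)}$ of the denominator never meets $\Omega$ (it does not for any $t>\beta$).
\hfill$\Box$
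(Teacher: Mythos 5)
Your argument is correct and is essentially the paper's proof: decompose $L^t=\Delta+B_t$, note $\|B_t\|_{\mathcal{L}(X,Y)}\to 0$ as $t\to\infty$ because $\Omega\subset B_1^c$ and $\alpha>1$, and invert by perturbing the isomorphism $\Delta:\widehat X\to Y$ from Theorem \ref{laplace}. The paper reparametrizes with $\delta=1/t$ and simply invokes ``standard functional analysis'' (a Neumann-series/bounded-perturbation argument) at the point where you spell out the contraction mapping on $\widehat X$; these are the same argument, and your version has the small virtue of making explicit how the constant $C=2M$ and the threshold $t_0$ arise and that the fixed point already lands in $\widehat X\subset X$ with the boundary condition built in.
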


\begin{lemma} \label{kernel_lap} (Kernel of $\Delta$) Suppose $ \Delta \phi=0$ in $ \IR^N \backslash \{0\}$ with $\sup_{0<|x|} |x|^{\sigma+1} | \nabla \phi(x)|<\infty$  or $ \Delta \phi=0$ in $ B_1 \backslash \{0\}$ with $ \partial_\nu \phi=0$ on $ \partial B_1$ and $ \sup_{B_1} |x|^{\sigma+1} |\nabla \phi(x)|<\infty$.   Then $ \phi$ is a constant. 
\end{lemma}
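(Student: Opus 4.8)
The plan is to argue by a spherical harmonics decomposition, exactly in the spirit of the earlier kernel lemmas but now for the plain Laplacian. Since $\phi$ is harmonic on the punctured domain it is smooth there, so with the eigenpairs $(\psi_k,\lambda_k)$ of $-\Delta_{S^{N-1}}$ fixed above we may write $\phi(x)=\sum_{k\ge 0}a_k(r)\psi_k(\theta)$. Each coefficient then solves the Euler equation $a_k''+\frac{N-1}{r}a_k'-\frac{\lambda_k}{r^2}a_k=0$, whence $a_k(r)=C_kr^{\gamma_k^+}+D_kr^{\gamma_k^-}$, where $\gamma_k^\pm$ are the roots of $\gamma^2+(N-2)\gamma-\lambda_k=0$; explicitly $\gamma_0^+=0$, $\gamma_0^-=-(N-2)$, and for $k\ge 1$ one has $\gamma_k^+\ge\gamma_1^+=1$ and $\gamma_k^-\le\gamma_1^-=-(N-1)$ by monotonicity in $k$.

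First I would turn the pointwise hypothesis $|x|^{\sigma+1}|\nabla\phi(x)|\le M$ into a bound on each $a_k'$. Writing $|\nabla\phi|^2=(\partial_r\phi)^2+r^{-2}|\nabla_\theta\phi|^2$ and using $\int_{S^{N-1}}\psi_k\psi_j=\delta_{kj}$ together with $\int_{S^{N-1}}\nabla_\theta\psi_k\cdot\nabla_\theta\psi_j=\lambda_k\delta_{kj}$, Parseval's identity gives $\sum_k\big(a_k'(r)^2+\lambda_kr^{-2}a_k(r)^2\big)=\int_{S^{N-1}}|\nabla\phi(r\theta)|^2\,d\theta\le|S^{N-1}|M^2r^{-2(\sigma+1)}$ on the relevant range of $r$. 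In particular $r^{\sigma+1}|a_k'(r)|\le C$ for every $k$.

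Then I would read off the vanishing of the coefficients from $a_k'(r)=C_k\gamma_k^+r^{\gamma_k^+-1}+D_k\gamma_k^-r^{\gamma_k^--1}$, so that $r^{\sigma+1}|a_k'(r)|$ is controlled by the powers $r^{\sigma+\gamma_k^+}$ and $r^{\sigma+\gamma_k^-}$. Since $\sigma=N-2+\E$ with $\E$ small, for $k\ge 1$ one has $\sigma+\gamma_k^+\ge N-1+\E>0$ and $\sigma+\gamma_k^-\le\E-1<0$. In the first case, on $\IR^N\setminus\{0\}$, sending $r\to\infty$ forces $C_k=0$ for $k\ge 1$ (the $k=0$ term carries $\gamma_0^+=0$ and contributes nothing to $a_0'$), and sending $r\to 0$ forces $D_k=0$ for $k\ge 1$; for the radial mode $a_0'(r)=-(N-2)D_0r^{-(N-1)}$, so $r^{\sigma+1}|a_0'(r)|=(N-2)|D_0|r^{\E}$, which stays bounded as $r\to 0$ but blows up as $r\to\infty$ unless $D_0=0$. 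Hence all $a_k$ with $k\ge 1$ vanish and $a_0$ is constant, that is, $\phi$ is constant. In the second case, on $B_1\setminus\{0\}$ with $\partial_\nu\phi=0$ on $\partial B_1$, sending $r\to 0$ again forces $D_k=0$ for $k\ge 1$, so $a_k(r)=C_kr^{\gamma_k^+}$; the Neumann condition gives $a_k'(1)=C_k\gamma_k^+=0$, hence $C_k=0$ since $\gamma_k^+\ge 1$. For $k=0$, $a_0(r)=C_0+D_0r^{-(N-2)}$ and $a_0'(1)=-(N-2)D_0=0$, so $D_0=0$. Again $\phi$ is constant.

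The one point that requires care — and the reason the statement couples the weighted-gradient bound with either decay at infinity or a boundary condition — is precisely the radial Newtonian mode $r^{-(N-2)}$: because here $\sigma=N-2+\E$ lies above the usual range $(0,N-2)$, one has $\sigma+\gamma_0^-=\E>0$, so this mode is not excluded near the origin by the gradient bound alone and must be eliminated using the behaviour of $\phi$ at infinity (first case) or the Neumann condition on $\partial B_1$ (second case). Everything else is routine; one should also, for completeness, note that the spherical harmonic series converges and may be differentiated term by term, which is standard for functions harmonic on a punctured domain.
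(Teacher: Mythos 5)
Your proof is correct and takes the same spherical-harmonics-plus-Euler-ODE approach as the paper; the only (minor, beneficial) difference is that you make explicit, via Parseval on $S^{N-1}$, how the pointwise weighted gradient bound yields the mode-by-mode bound $r^{\sigma+1}|a_k'(r)|\le C$, which the paper merely asserts. Your remark about $\sigma+\gamma_0^-=\E>0$ correctly identifies why, for this nonstandard weight, the Newtonian mode $r^{-(N-2)}$ must be eliminated by the behaviour at infinity or the Neumann condition rather than by the bound near the origin.
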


\begin{proof}     Now suppose $ \phi$ as in the hypothesis and we write as $ \phi(x)=\sum_{k=0}^\infty a_k(r) \psi_k(\theta)$.    Then for all $ k \ge 0$ we have 
   \begin{equation} \label{ode_lap}
   a_k''(r)+\frac{N-1}{r} a_k'(r)- \frac{\lambda_k a_k(r)}{r^2} = 0, \quad \mbox{ in } r \in (0,R),
   \end{equation}  where $R=\infty$ in the first case and in the second case $R=1$ and one has the boundary condition $a_k'(1)=0$.  Also note there is some $C_k>0$ such that we have $ \sup_{0<r<R} r^{\sigma+1} |a_k'(r)| <\infty$.  We now consider the various modes. 
   
   \begin{itemize}
       \item ($k=0$). The general solution in this case is $ a_0(r)= C_0 + \frac{D_0}{r^{N-2}}$. We first consider  the case of $R=\infty$.  In this case we see to satisfy the gradient bound we must have $D_0=0$ and hence $a_0$ is constant.     When $R=1$ we also see $D_0=0$ since $a'_0(1)=0$.   
       
       \item ($k \ge 1$). Note $a_k$ satisfies an ode of Euler type and hence the roots of $ \gamma^2 +(N-2)\gamma-\lambda_k=0$ are relevant.  In this case the general solution is given by $ a_k(r)=C_k r^{\gamma_k^+} + D_k r^{\gamma_k^-}$ where 
       \[ \gamma_k^\pm = \frac{-(N-2)}{2} \pm  \frac{\sqrt{ (N-2)^2+4 \lambda_k} }{2}.\]  Note that 
       \[ r^{\sigma+1} a_k'(r)= C_k \gamma_k^+ r^{\gamma_k+\sigma}+ D_k \gamma_k^- r^{\gamma_k^-+\sigma}.\] In the case of $ R=\infty$ we see that if $ \gamma_k^+,\gamma_k^-, \gamma_k^++\sigma, \gamma_k^-+\sigma$ are all nonzero then we can show the quantity on the left is unbounded in $r$ unless $ C_k=D_k=0$.   We come back to these verifying these quantities are nonzero shortly.   Now consider the case of $R=1$.  Then to satisfy $a_k'(1)=0$ imposes the condition $ C_k \gamma_k^+ + D_k \gamma_k^-=0$ and hence 
       \[ r^{\sigma+1} a_k'(r)= C_k \gamma_k^+ \left( r^{\gamma_k^++\sigma}-r^{\gamma_k^-+\sigma} \right).\]   In this case note if $ \gamma_k^-+\sigma<0$  then we must have $C_k=0$ to satisfy the desired estimate.  \\
       We now consider the various parameters in question.  Note that $\gamma_1^+=1$ and $\gamma_1^-=-N+1$ and hence by monotonicity of $ \gamma_k^\pm$ we see $ \gamma_k^\pm \neq 0$ for $k \ge 1$. Also note by montonicity we have 
       \[ \gamma_k^+ +\sigma \ge \gamma_1^++\sigma = \sigma+1>0, \qquad \gamma_k^-+\sigma \le \gamma_1^-+\sigma =-1+\E<0.\] 
       \end{itemize}
\end{proof}

\noindent 
\textbf{Proof of Theorem \ref{laplace}.}  Its clear $ \Delta $ is linear and continuous (to see its continuous note the $\widehat{X}$ norm includes the graph norm for $\Delta$).     \\ 

\noindent \emph{One to one.}  Let $ \phi \in \widehat{X}$ with $ \Delta \phi=0$ in $\Omega$ and $ \phi=0$ on $ \pOm$.  By integrating the first order portion of the $\widehat{X}$ norm along a ray one sees that $ \phi$ is bounded.    Let $R$ be big and multiply the equation by $ \phi$ and integrate over $\Omega \cap B_R$ (the open ball centred at the origin in $\IR^N$) to see 
\[ \int_{\Omega \cap B_R} | \nabla \phi|^2 dx \le \sup_{\Omega} | \phi| \int_{\partial B_R} | \nabla \phi| \le  \sup_\Omega | \phi|  C_N \| \phi \|_{\widehat{X}} R^{-\E},\] after recalling the value of $\sigma$.  Sending $ R \rightarrow \infty$ we see that $ \phi=0$ after taking into account the boundary condition of $ \phi$.   \\

\noindent
\emph{Onto.}  Let $R_m \rightarrow \infty$ and consider the problem

\begin{eqnarray} \label{Lin_eq_approx}
 \left\{ \begin{array}{lcl}
\hfill  \Delta \phi(x)    &=& f(x) \qquad \mbox{ in }  \Omega_m,  \\
\hfill \phi &=& 0 \hfill \mbox{ on }  \pOm,  \quad \mbox{ \footnotesize(the inner boundary)}\\
\hfill \partial_\nu \phi &=& 0 \quad \hfill \mbox{ on }  \partial B_{R_m},  \quad \mbox{ \footnotesize(the outer boundary)}\\
\end{array}\right.
  \end{eqnarray} where $\Omega_m:= \Omega \cap B_{R_m}$. We claim there is some $C>0$  such that for all $m$ large and $ f_m \in Y$ there is some $ \phi_m$ which satisfies (\ref{Lin_eq_approx}) and moreover one has the estimate $ \| \phi_m \|_{\widehat{X}} \le C \|f_m\|_Y$.  We accept the validity of the claim for now.    Then given $ f \in Y$ (on $\Omega$) we let $ \phi_m$ satisfy (\ref{Lin_eq_approx}).   We can then use a diagonal argument and compactness to pass to the limit (after passing to a suitable subsequence) to find some $\phi \in \widehat{X}$ (on $\Omega$) which satisfies $ \Delta \phi=f$ in $\Omega$ with $ \phi=0$ on $ \pOm$.  Moreover one has the estimate $ \| \phi \|_{\widehat{X}} \le C \|f\|_Y$.    \\
  
  \noindent
  \emph{Proof of claim.} There is no issue with the existence of a solution of (\ref{Lin_eq_approx}),  the only possible problem is the estimate fails.  We first prove the estimate if we replace the $\widehat{X}$ norm with the $X$ norm. Towards a contradiction we can assume for large enough $m$ the estimate fails.  Then after normalizing there is $ \phi_m \in {X}$ (in $\Omega_m$) and $ f_m \in Y$ (in $\Omega_m$) which satisfies (\ref{Lin_eq_approx}) and $ \| \phi_m \|_{X}=1$ and $ \|f_m\|_Y \rightarrow 0$. Then there is some $ x_m \in \Omega_m$ such that $ |x_m|^{\sigma+1} | \nabla \phi_m(x_m)| \ge \frac{1}{2}$ and we set $ s_m=|x_m|$.   We consider three cases: 
   (i) $ s_m $ bounded; \quad (ii) $ \frac{s_m}{R_m} \rightarrow 0$; \quad   (iii) $ \frac{s_m}{R_m}$  bounded away from zero.  \\
   
   \noindent 
   \emph{Case (i).}  Using compactness and a diagonal argument we see that there is some $ \phi $ such that $\phi_m \rightarrow \phi $ in $ C^{1,\delta}_{loc}(\overline{\Omega} \cap B_R)$ for all $R$ large.  Using the convergence we can pass to the limit in the equation and hence $ \Delta \phi=0$ in $ \Omega$ with $ \phi=0$ on $ \pOm$. Also note that since $ s_m$ is bounded there is some $x_0 \in \overline{\Omega}$ such that $ | \nabla \phi(x_0)| \ge \frac{1}{2}$.  Additionally we have $ | \nabla \phi(x)| \le 1$ in $\Omega$ and hence we can apply our result regarding the kernel to obtain a contradiction.  \\

   \noindent 
   \emph{Case (ii).}  In this case we consider $ \zeta_m(z):= s_m^\sigma \phi_m(s_m z)$ for $z \in \Omega^m:=\{z:  s_m z \in \Omega_m \}$ and note $\Omega_m \rightarrow \IR^N \backslash \{0\}$.  We define $z_m$ by $s_m z_m=x_m$ and hence $ | z_m|=1$ satisfies $ | \nabla \zeta_m(z_m)| \ge \frac{1}{2}$.  Additionally note that $|z|^{\sigma+1} | \nabla \zeta_m(z)| \le 1$ in $ \Omega^m$.   Set $\zeta^m(z):= \zeta_m(z)-\zeta_m(z_m)$ and hence $ \zeta^m$ satisfies the same estimates as $\zeta_m$ and $ \zeta^m(z_m)=0$.   Also we note that $\Delta \zeta^m(z)=g_m(z):=s_m^{\sigma+2} f_m(s_m z)$ in $ \Omega^m$.   Using a diagonal and compactness argument there is some $ \zeta$ such that $ \Delta \zeta=0$ in $ \IR^N \backslash \{0\}$;  $ \zeta^m \rightarrow \zeta$ in $C^{1,\delta}_{loc}( \IR^N \backslash \{0\})$ and if $z_m \rightarrow z_0$  then we have $|\nabla \zeta(z_0)| \ge \frac{1}{2}$.  But this contradicts the results from Lemma \ref{kernel_lap}. \\

   \noindent 
   \emph{Case (iii).} We now assume $\frac{s_m}{R_m}$ is bounded away from zero.  Here we consider $ \zeta_m(z):=R_m^\sigma \phi_m(R_m z)$ for $ z \in \Omega^m:=\{ z: R_m z \in \Omega_m \}$ and note the outer portion of the boundary of $\Omega^m$ is just $ \partial B_1$. Also note $\Omega_m$ is roughly an annulus with a shrinking hole at the origin. We define $z_m$ by $ R_m z_m=x_m$ and so $ |z_m| \le 1$ and is bounded away from zero.    Also note we have $ | \nabla \zeta_m(z)| \le 1$ in $ \Omega^m$ and $ | \nabla \zeta_m(z_m)| \ge \E_0$ for some $\E_0>0$.   We now set $ \zeta^m(z)=\zeta_m(z)-\zeta_m(z_m)$ and note $\zeta^m$ satisfies the same estimates and $\zeta^m(z_m)=0$.   Also note that $ \zeta^m$ satisfies $\Delta \zeta^m(z)= g_m(z):=R_m^{\sigma+2} f_m(R_m z)$ in $\Omega^m$  with $ \partial_\nu \zeta^m =0$ on $\partial B_1$ (the outer portion of $\partial \Omega^m$) and we omit the boundary condition on the inner boundary.  By a compactness argument and a diagonal argument there is some $\zeta$ such that $\zeta^m \rightarrow \zeta$ in $C^{1,\delta}_{loc}( \overline{B_1} \backslash \{0\})$.   Moreover we have $| \nabla \zeta| \neq 0$ and $ |z|^{\sigma+1} | \nabla \zeta(z)| \le 1$ in $B_1 \backslash \{0\}$ and $ \Delta \zeta(z)=0$ in $B_1 \backslash \{0\}$ with $ \partial_\nu \zeta=0$ on $ \partial B_1$.  But this contradicts the results from Lemma \ref{kernel_lap}.   \\
   
   So we have shown that we have the desired gradient estimate on $\phi$.  The second order estimate on $\phi$ comes directly off the equation. 
   
   \hfill $\Box$


\noindent
\textbf{Proof of Corollary \ref{general_domain_prop}.} Recall we have 
\[ L^t(\phi)= \Delta \phi + \frac{p x \cdot \nabla \phi(x)}{|x| \left( t |x|^\alpha - \beta |x| \right)}.\]  The claim is that for large $t$ we can see $L^t$ as a perturbation of $\Delta$.   To see this we write $ \delta=\frac{1}{t}$ and then we can write 
\[ L^t(\phi)= \tilde{L}^\delta:=\Delta \phi + T^\delta(\phi),\] where 
\[ T^\delta(\phi)(x):= \frac{\delta p x \cdot \nabla \phi(x)}{|x| \left( |x|^\alpha -\delta \beta |x| \right)}.\]   Let $ \phi \in X$ with $ \| \phi \|_X \le 1$ and then note we have 
\[ \|T^\delta(\phi) \|_Y \le \delta p \sup_\Omega \frac{1}{|x|^{\alpha-1} -\delta \beta},\] for small enough $ \delta$ and hence the operator norm $ \|T^\delta\|_{\mathcal{L}(X,Y)} \le C \delta$ for small enough $ \delta$.  Using this and Theorem \ref{laplace} one can apply some standard functional analysis to complete the proof.     Note if one tries a similar argument on the linear operators from the previous sections they will see it fails. 
\hfill $\Box$.

\end{document}